\crefname{equation}{}{}
\crefname{enumi}{}{}
\Crefname{claim}{Claim}{Claims}
\newtheorem{theorem}{Theorem}[section]
\newtheorem{lemma}[theorem]{Lemma}
\newtheorem{corollary}[theorem]{Corollary}
\newtheorem{proposition}[theorem]{Proposition}
\theoremstyle{definition}
\newtheorem{remark}[theorem]{Remark}
\newtheorem{definition}[theorem]{Definition}
\newtheorem{claim}[theorem]{Claim}
\newtheorem{example}[theorem]{Example}
\newtheorem{question}[theorem]{Question}
\numberwithin{equation}{section}
\newcommand{\R}{\mathbb{R}}
\newcommand{\Z}{\mathbb{Z}}
\newcommand{\cM}{\mathcal{M}}
\newcommand{\abs}[1]{\left|#1\right|}
\newcommand{\floor}[1]{\left\lfloor#1\right\rfloor}
\DeclareMathOperator{\conv}{\operatorname{conv}}
\DeclareMathOperator{\vol}{\operatorname{vol}}
\DeclareMathOperator{\rk}{\operatorname{rk}}
\DeclareMathOperator{\g}{g}
\DeclareMathOperator{\h}{h}
\DeclareMathOperator{\s}{s}
\newcommand{\zero}{\mathbf{0}}
\title{On generic $\Delta$-modular integer matrices with two rows}
\author{Björn Kriepke\thanks{Institute of Mathematics, University of Rostock, Germany, \href{mailto:bjoern.kriepke@uni-rostock.de}{bjoern.kriepke@uni-rostock.de}} \and Matthias Schymura\thanks{Institute of Mathematics, BTU Cottbus-Senftenberg, Germany, \href{mailto:schymura@b-tu.de}{schymura@b-tu.de}}}
\date{March 27, 2025}
\begin{document}

\maketitle

\begin{abstract}
The column number question asks for the maximal number of columns of an integer matrix with the property that all its rank size minors are bounded by a fixed parameter $\Delta$ in absolute value.
Polynomial upper bounds have been proved in various settings in recent years, with consequences for algorithmic questions in integer linear programming and matroid theory.
In this paper, we focus on the exact determination of the maximal column number of such matrices with two rows and no vanishing $2$-minors.
We prove that for large enough $\Delta$, this number is a quasi-linear function, non-decreasing and always even.
Such basic structural properties of column number functions are barely known, but expected to hold in other settings as well.
Moreover, our results identify the unique excluded (co)rank two minors for the class of matroids that are representable as a $\Delta$-submodular matrix.
\end{abstract}

\section{Introduction}

A fundamental insight in integer linear programming is that the underlying polyhedron of a linear program of the form
\[
\max\left\{ c^\intercal x : Bx \leq d , x \in \R^r \right\} \,,
\]
for $B \in \Z^{n \times r}$, $c \in \Z^r$ and $d \in \Z^n$ has only integral vertices, if $B$ is a \emph{totally unimodular} matrix, that is, every minor of any size of~$B$ equals either $-1,0$, or~$1$.
This means, that for such a program optimizing over integral solutions $x \in \Z^r$ only can be done simply by linear programming techniques, and thus in polynomial time (see, e.g.~\cite{schrijver1986theoryoflinearandinteger}).
In general however, the decision problem behind integer linear programming is $\mathrm{NP}$-hard.
In an attempt to parametrize integer linear programming with the goal to achieve efficient algorithms, the following notions have received considerable interest in the recent literature (see, e.g.~\cite{artmannweismantelzenklusen2017astrongly,
averkovschymura2023onthemaximal,
bonifasdisummaeisenbrandhaehnleniemeier2014onsubdeterminants,
fiorinijoretweltgeyuditsky2021integer,
jiangbasu2022enumerating,
oxleywalsh2022-2modular}):

For an integer matrix $A \in \Z^{r \times n}$ with full row rank $\rk(A)=r$, we denote by
\begin{equation*}
    \Delta(A) = \max\{\abs{\det B}: B \text{ is an } r \times r \text{ submatrix of }A\} \,.
\end{equation*}
We call $A$ a \emph{$\Delta$-submodular} matrix if $\Delta(A)\leq \Delta$ and call it \emph{$\Delta$-modular} if $\Delta(A)=\Delta$.\footnote{In the literature, for instance in~\cite{leepaatstallknechtxu2023polynomial}, $\Delta$-submodularity is often just called $\Delta$-modularity. It doesn't affect the statement of the results that we cite in this paper, but we prefer to make the distinction for more clarity in our arguments.}
The prevailing conjecture in the optimization community is that, for fixed~$\Delta \in \Z_{>0}$, integer linear programs with $\Delta$-modular constraint matrices can be solved in polynomial time (see~\cite{shevchenko1997qualitative}).
Motivated by this approach, there is ongoing interest in studying the combinatorics of $\Delta$-modular integer programs, and in particular, in studying the maximal size of $\Delta$-modular matrices.
We refer to~\cite{artmannweismantelzenklusen2017astrongly,kriepkeSizeIntegerPrograms2025} for thorough introductions to this topic.

In this paper, we contribute to this so-called \emph{column number problem} and determine the maximal size of generic $\Delta$-modular matrices with two rows, if~$\Delta$ is sufficiently large.
In order to make this precise and give an account of the state of the art, we say that an integer matrix~$A \in \Z^{r \times n}$ is \emph{generic}, if any~$r$ columns of~$A$ are linearly independent; we say that~$A$ is \emph{simple}, if~$A$ has only non-zero and pairwise non-parallel columns.
We always assume all matrices $A \in \Z^{r \times n}$ to have pairwise distinct columns, allowing us to consider~$A \subseteq \Z^r$ as a subset of integer vectors of cardinality~$n$.

The notion of genericity is motivated by the paper of Jiang \& Basu~\cite{jiangbasu2022enumerating}, who showed that for generic $\Delta$-modular integer programs the vertices of the integer hull of the underlying polyhedron can be enumerated in polynomial time.
The notion of simplicity is closely related to matroid theory and the growth rate question for minor-closed matroid classes (see, e.g.~\cite{geelennelsonwalsh2024excluding}).
In fact, our results have implications to excluded minor characterizations for (co)rank-$2$ $\Delta$-submodular matroids, as we discuss further below.

Now, the relevant counting functions can be formulated as
\begin{align*}
    \g(\Delta,r) &= \max\{ n: \text{there is a generic $\Delta$-modular matrix $A \in \Z^{r \times n}$ with $\rk(A)=r$}\} \\
    \s(\Delta,r) &= \max\{ n: \text{there is a simple $\Delta$-modular matrix $A \in \Z^{r \times n}$ with $\rk(A)=r$}\} \\
    \h(\Delta,r) &= \max\{ n: \text{there is a $\Delta$-modular matrix $A \in \Z^{r \times n}$ with $\rk(A)=r$}\}\, .
\end{align*}
Clearly, we have $\g(\Delta,r) \leq \s(\Delta,r) \leq \h(\Delta,r)$ for all $\Delta, r \in \Z_{>0}$.
Over recent years strong polynomial upper bounds on these functions have been obtained in terms of either of the parameters~$\Delta$ and~$r$.
The best bounds to date are
\begin{align}
\g(\Delta,r) \leq 130 \, r^3 \Delta^{\frac{2}{r}} \quad \textrm{ and } \quad
 \g(\Delta,r) \leq \begin{cases} r + 1   & \text{, if } r \geq 2\Delta - 1\\
2\Delta & \text{, if } r < 2\Delta - 1 \,, \end{cases}\label{eqn:g-bounds}
\end{align}
due to~\cite{kriepkeSizeIntegerPrograms2025};
\[
\s(\Delta,r) \leq \tbinom{r+1}{2} + 80 \Delta^7 \cdot r \quad \textrm{ for every } \quad \Delta \in \Z_{>0} \textrm{ and } r \textrm{ sufficiently large} \,,
\]
due to~\cite{paatColumnNumberForbidden2024}; and
\[
\h(\Delta,r) \leq r^2 + r + 1 + 2\,(\Delta - 1) \cdot \sum_{i=0}^4 \tbinom{r}{i} \quad \textrm{ for all } r \geq 5 \textrm{ and } \Delta \in \Z_{>0} \,,
\]
due to~\cite{averkovschymura2023onthemaximal}.
We refer to the three mentioned papers for background and references to other results in this context.

On the other hand, we know very little about basic structural properties of these counting functions, such as monotonicity or whether these functions are polynomials in~$\Delta$ or~$r$.
Also, determining these functions exactly is possible only in exceptional cases so far:
The most prominent result is due to Heller~\cite{heller1957onlinear} who showed $\h(1,r) = r^2 + r + 1$, which was the basis for many of the subsequent papers.
Lee, Paat, Stallknecht \& Xu~\cite{leepaatstallknechtxu2023polynomial} prove
\[
\h(2,r) = r^2 + 3r + 1, \textrm{ for all } r \in \Z_{>0}, \quad \textrm{ and } \quad \h(\Delta,2) = 4 \Delta + 3, \textrm{ for all } \Delta \in \Z_{>0} \,.
\]
A combination of the results in~\cite[Sect.~4]{leepaatstallknechtxu2023polynomial} and Oxley \& Walsh~\cite{oxleywalsh2022-2modular} shows that $\s(2,r)$ is completely determined, and we have
\[
\s(2,r) = \begin{cases}
\binom{r+1}{2} + r     &, \textrm{ if } r \in \{3,5\} \\
\binom{r+1}{2} + r - 1 &, \textrm{ if } r \in \Z_{>0} \setminus \{3,5\} \,.
\end{cases}   
\]
For generic matrices, the result in~\cite[Thm.~1.3]{kriepkeSizeIntegerPrograms2025} improves upon~\cite{artmanneisenbrandglanzeroertelvempalaweismantel2016anote} and shows
\[
\g(\Delta,r) = r+1 \quad \textrm{ for all } \quad r \geq 2\Delta - 1 \textrm{ and } \Delta \geq 2 \,.   
\]
Such an exact determination of $\g(\Delta,r)$ for large~$r$ was crucial in the aforementioned polynomial time enumeration algorithm of Jiang \& Basu~\cite{jiangbasu2022enumerating}.
Finally, algorithmic approaches developed in~\cite{averkovschymura2023onthemaximal} and~\cite{kriepkeSizeIntegerPrograms2025} reveal the exact values of $\g(\Delta,2)$, for $\Delta \leq 450$, of $\g(\Delta,r)$, for $r \in \{3,4,5\}$ and $\Delta \leq 40$, of $\h(\Delta,3)$, for $\Delta \leq 25$, and of $\h(\Delta,4)$, for $\Delta \leq 8$.

All these results point to a phenomenon that seems common to all the three column number functions $\g(\Delta,r)$, $\s(\Delta,r)$, $\h(\Delta,r)$:
For~$\Delta$ or~$r$ fixed, and small values of the other parameter, the function values show quite an erratic behavior and do not seem to fit into a common uniform description.
However, if~$\Delta$ or~$r$ is taken large enough with respect to the other parameter, uniform bounds or exact uniform descriptions can be given.
To the best of our knowledge, there is no good explanation for this phenomenon available to date, and more data seems to be needed in order to develop understanding.

\medskip
With this goal in mind, our starting point is our earlier paper~\cite{kriepkeSizeIntegerPrograms2025} in which we present an algorithm to compute $\g(\Delta,r)$, and where we compute the values $\g(\Delta,2)$, for every $1 \leq \Delta \leq 450$.
We observed that, for $\Delta \leq 384$, the function $\g(\Delta,2)$ shows said erratic behavior, but for $385\leq \Delta\leq 450$ it can be described uniformly.
In~\cite[Conj.~4.14]{kriepkeSizeIntegerPrograms2025} it was conjectured that this uniform description carries over asymptotically, that is, for every large enough~$\Delta$.
In this paper, we prove this conjecture:

\begin{theorem}\label{thm:upper_bound_on_g_Delta2}
For every $385 \leq \Delta \leq 1\,550$ and every $\Delta\geq 10^8$, we have
\begin{equation*}
 \g(\Delta,2) = 
 2\floor{\frac{\Delta+5}{6}} + 2\floor{\frac{\Delta+1}{3}} + 2
 =
 \begin{cases}
  \Delta + 4 & ,\textrm{ if }\Delta \equiv 2 \bmod 6 \\
  \Delta + 3 & ,\textrm{ if }\Delta \equiv 1,3,5 \bmod 6 \\
  \Delta + 2 & ,\textrm{ if }\Delta \equiv 0,4 \bmod 6 \,.
 \end{cases}
\end{equation*}
\end{theorem}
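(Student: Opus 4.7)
The theorem splits into two complementary regimes: the finite range $385 \leq \Delta \leq 1\,550$, which extends the algorithmic enumeration from~\cite{kriepkeSizeIntegerPrograms2025} (carried out there only up to $\Delta \leq 450$), and the asymptotic range $\Delta \geq 10^8$, which calls for a purely structural combinatorial argument. My plan is to handle the finite regime by sharper algorithmic pruning, more aggressive use of $\mathrm{GL}_2(\Z)$-symmetry in the normal-form reduction, and parallel computation applied to the same enumeration framework of~\cite{kriepkeSizeIntegerPrograms2025}, while focusing the mathematical work on the asymptotic regime.

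For the lower bound I would exhibit, for each residue class of $\Delta$ modulo $6$, an explicit generic $\Delta$-modular matrix with exactly the claimed number of columns. After a $\mathrm{GL}_2(\Z)$-normalization, a natural candidate places all columns on the few lines $y \in \{0, \pm 1, \pm 2\}$, using pairwise non-parallel primitive directions drawn from a Farey-like fan, together with a few carefully chosen non-primitive scalar multiples on the $x$-axis. The residue of $\Delta$ modulo $6$ then controls whether one or two additional "boundary" vectors can be adjoined without creating a parallel pair or violating the pairwise determinant bound, which accounts for the three different subcases in the formula.

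For the upper bound, which is the substantive direction, I would apply $\mathrm{GL}_2(\Z)$-invariance to put a strategically chosen column $v$ of $A$ into the standard form $(d, 0)$, where $d$ is the content of $v$. The $\Delta$-modularity condition then forces every other column $(x, y)$ to satisfy $d\,|y| \leq \Delta$, confining all of $A$ to the horizontal strip $|y| \leq \Delta/d$, while any two columns on the same line $y = c$ have $x$-coordinates within distance $\Delta/|c|$. Combined with the crude bound $\g(\Delta,2) \leq \h(\Delta,2) = 4\Delta + 3$ from~\cite{leepaatstallknechtxu2023polynomial}, an averaging/pigeonhole argument should force $d$ to be small (most plausibly $d \in \{1, 2\}$) in any near-extremal configuration; the genericity condition, which prohibits two parallel columns, then allows at most one column per primitive direction on each horizontal line. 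The main technical obstacle, in my expectation, is the ensuing discrete optimization: after fixing $d$ and the sign pattern of $y$, one must count columns line-by-line while respecting the coupling constraints from pairwise cross-products between lines, and show that the optimum equals $2\lfloor(\Delta+5)/6\rfloor + 2\lfloor(\Delta+1)/3\rfloor + 2$. I expect the hypothesis $\Delta \geq 10^8$ to enter precisely here, to absorb lower-order error terms arising from secondary extremal configurations that concentrate mass on lines with $|y| \geq 3$; these become negligible asymptotically but are delicate to control for intermediate $\Delta$, which is presumably what creates the gap between $1\,550$ and $10^8$ in the theorem statement.
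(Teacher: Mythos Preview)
Your proposal has the right overall architecture---normalize, confine the columns to a strip of horizontal lines, count line-by-line subject to the pairwise determinant constraints, and optimize---but the upper-bound argument contains a genuine gap that would cause the method to fail as stated.

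The problem is your strip bound. Normalizing a single column to $(d,0)$ and using $|dy|\le\Delta$ gives you a strip of width $\Delta/d$. You then argue that pigeonholing should force $d\in\{1,2\}$, but this is the wrong direction: small $d$ makes the strip \emph{wide}, of order $\Delta$ lines, not narrow. With $\sim\Delta$ lines, the line-by-line count $\sum_{|c|\le\Delta}\Delta/|c|$ is of order $\Delta\log\Delta$, and the coupling constraints between lines, even when handled via an LP as in the paper, produce an error term of the same order $m\log m\sim\Delta\log\Delta$, which swamps the main term. The paper avoids this by invoking the lattice-width of the symmetrized point set $Q_A=\conv(A\cup(-A))$: a combination of results of Dowker, Sas, and Makai gives $\omega_L(Q_A)\le\sqrt{2\pi\Delta}$, so after a unimodular transformation the columns lie on at most $m\le\sqrt{\pi/2}\,\sqrt{\Delta}$ lines. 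This square-root saving is what makes the error term $O(m\log m)=O(\sqrt{\Delta}\log\Delta)$ absorbable for $\Delta\ge 10^8$, and it does not follow from looking at a single determinant.

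Beyond this, the paper's treatment of the ``discrete optimization'' step is also more structured than your sketch anticipates. After reducing to the well-structured matrices $M(a,b)$, the column count is bounded by $z_m\Delta+O(m\log m)$, where $z_m$ is the optimum of an explicit linear program with objective $\sum_k\varphi(k)x_k$ and constraints $x_k+x_\ell\le 2/(k\ell)$. The heart of the proof is showing $z_m\le 0.999$ for all $m\ge 4$, via a dual feasible solution whose support approximates a quarter-circle, combined with explicit estimates for totient sums. The cases $m\in\{1,2,3\}$ are then handled by a direct residue analysis modulo $6$, which is where the three subcases in $\tilde\g(\Delta)$ actually arise. Your proposal does not surface the totient function or the LP duality, both of which are essential to closing the argument.
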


\noindent The first and finite range of values of~$\Delta$ is obtained by exhausting the computational approach from~\cite{kriepkeSizeIntegerPrograms2025}.
We iterate~\cite[Conj.~4.14]{kriepkeSizeIntegerPrograms2025} and conjecture that \cref{thm:upper_bound_on_g_Delta2} holds for all $\Delta \geq 385$ instead of~$10^8$.
The value for $\Delta=384$ equals $\g(384,2) = 387$ and does not obey the uniform description in the theorem, so that the proposed bound on~$\Delta$ would be best possible.

The gist of \cref{thm:upper_bound_on_g_Delta2} is of course the determination of~$\g(\Delta,2)$ for all $\Delta \geq 10^8$.
Our approach can be briefly described as follows:
As a variant of~$\g(\Delta,2)$, we define $\g'(\Delta,2)$ as the maximal number of columns of a generic $\Delta$-\emph{sub}modular matrix with two rows and rank two.
From the definition it is clear that $\Delta \mapsto \g'(\Delta,2)$ is non-decreasing and $\g(\Delta,2) \leq \g'(\Delta,2)$.
Furthermore, we denote by $\tilde \g(\Delta)$ the function
\begin{equation*}
    \tilde \g(\Delta) =
            \begin{cases}
                \Delta + 4 & ,\textrm{ if }\Delta \equiv 2 \bmod 6 \\
                \Delta + 3 & ,\textrm{ if }\Delta \equiv 1,3,5 \bmod 6 \\
                \Delta + 2 & ,\textrm{ if }\Delta \equiv 0,4 \bmod 6 \,.
            \end{cases}
\end{equation*}
\cref{thm:upper_bound_on_g_Delta2} then follows from the inequalities
\[
    \g(\Delta,2) \leq \g'(\Delta,2) \leq \tilde\g(\Delta) \leq \g(\Delta,2) \quad \textrm{ for every } \Delta \geq 10^8 \,.
\]
The first and last inequality (see~\cite{kriepkeSizeIntegerPrograms2025}) hold unconditionally on~$\Delta$.
The inequality in the middle is the difficult step and is based on two key observations.
First, we show in \cref{prop:omnipresence_matrices_M_a_b} that for every~$\Delta$ there is a very well-structured $\Delta$-submodular matrix $M(a,b)$ with the maximal number $\g'(\Delta,2)$ of columns.
This uses arguments from the geometry of numbers.
Secondly, in \cref{prop:upper_bound_number_columns_version_2}, we prove an upper bound on the number of columns of these matrices $M(a,b)$ in terms of the optimal value of a suitable linear program whose objective function is highly number theoretical.
Estimating the optimal value of said linear program and applying some estimates from analytical number theory then finishes the proof.
The threshold $\Delta_0 = 10^8$ in \cref{thm:upper_bound_on_g_Delta2} is imposed by the quality of both of these estimates.
Although we did not optimize the threshold as much as possible, it is clear that without significantly new ideas we will not be able to sufficiently lower the value for~$\Delta_0$ in order for the computer to compute $\g(\Delta,2)$, for the remaining $\Delta \leq \Delta_0$ (see \cref{sect:limitations}).

As a final remark on the proof of \cref{thm:upper_bound_on_g_Delta2}, we point out that the condition that we impose on the matrices $M(a,b)$ (see \cref{def:M_a_b}) is conceptually much closer to simplicity than to genericity.
In particular, it might be possible to generalize our approach for $\g(\Delta,2)=\s(\Delta,2)$ in order to determine $\s(\Delta,r)$ asymptotically for $r \geq 3$.

\medskip
\cref{thm:upper_bound_on_g_Delta2} has a number of structural consequences for the function $\g(\Delta,2)$.
We say that a property of a function $\Delta \mapsto f(\Delta)$ holds \emph{eventually}, if it holds for all $\Delta \geq \Delta_0$, for some threshold $\Delta_0 \in \Z_{>0}$.
First of all and somewhat curiously, \cref{thm:upper_bound_on_g_Delta2} shows that $\g(\Delta,2)$ is eventually even.
Note that this is not true for all~$\Delta$.
For instance, $\g(\Delta,2)$ is an odd number for $\Delta \in \{19,23,33,47,53,54,384\}$ (see~\cite[Table~2]{kriepkeSizeIntegerPrograms2025}).

Second, the first representation in \cref{thm:upper_bound_on_g_Delta2} in terms of the floor function shows that $\g(\Delta,2)$ is eventually a non-decreasing function.
The data on $\g(\Delta,r)$, for $r \in \{3,4,5\}$, obtained in~\cite{kriepkeSizeIntegerPrograms2025} suggests that this also holds for $\g(\Delta,r)$ for any fixed~$r \geq 2$.

Finally, the second representation shows that $\g(\Delta,2)$ is eventually a \emph{quasi-linear} function, more precisely, $\Delta \mapsto \g(\Delta,2)$ is linear when restricted to values of~$\Delta$ in a fixed residue class modulo~$6$.
Quasi-\emph{polynomiality} of counting functions motivated by geometric questions is a reoccurring phenomenon.
We refer to the overview paper of Woods~\cite{woods2014unreasonable} for more information on this matter.
It would be desirable to develop a structural understanding for this property, and see whether other column number functions exhibit eventual quasi-polynomiality for any of the two parameters~$\Delta$ or~$r$ as well.
In~\cite[Thm.~1.4]{kriepkeSizeIntegerPrograms2025} it is shown that for fixed $r \geq 3$, the function $\Delta \mapsto \g(\Delta,r)$ is upper bounded by a sublinear function (see~\eqref{eqn:g-bounds}), so it does not exhibit eventual (quasi-)polynomial behavior for more than two rows.
However, the following questions may be posed

\begin{question}\ 
\begin{enumerate}[label=(\roman*)]
 \item Is any of the counting functions $\g(\Delta,r),\s(\Delta,r)$, or $\h(\Delta,r)$ eventually non-decrea\-sing, for any fixed~$r$ ?
 \item Is any of the functions $\s(\Delta,r)$ or $\h(\Delta,r)$ eventually a (quasi-)polynomial, for any fixed $r \geq 3$ or any fixed~$\Delta$ ?
\end{enumerate}
\end{question}

\noindent The discussion above shows that for $\g(\Delta,2) = \s(\Delta,2)$, $\h(\Delta,2)$, $\s(2,r)$, $\h(2,r)$, and $\g(\Delta,r)$ for every fixed~$\Delta \geq 2$, both of these questions have an affirmative answer.
Monotonicity of $\s(\Delta,r)$ and $\h(\Delta,r)$ for fixed~$\Delta$ is clear, as in a (simple) $\Delta$-modular rank~$r$ matrix with the maximal number of columns, one may add an all-ones-row and then the column $(0,\ldots,0,1)^\intercal \in \Z^{r+1}$ and obtain a (simple) $\Delta$-modular matrix with full row-rank $r+1$ and one more column.
Thus, both $\s(\Delta,r)$ and $\h(\Delta,r)$ are strictly increasing as functions of~$r$.

\medskip
As we hinted at before, \cref{thm:upper_bound_on_g_Delta2} has implications on excluded minor characterizations in matroid theory.
To this end, let $\cM_\Delta$ denote the class of matroids that have a representation over~$\R$ as a $\Delta$-submodular matrix, and let $U_{r,n}$ be the rank~$r$ uniform matroid on~$n$ elements.
An excluded minor characterization of the class~$\cM_1$ is known.
A list of excluded minors for~$\cM_2$ has been identified, but it is open whether this list is complete (see~\cite[Sect.~4]{oxleywalsh2022-2modular}).
For $\Delta \geq 3$ much less is known, and an excluded minor characterization of~$\cM_\Delta$ is one of the challenging open problems in matroid theory.
Oxley \& Walsh~\cite[Prob.~4.5]{oxleywalsh2022-2modular} thus propose to first focus on rank two excluded minors for~$\cM_\Delta$.
As they discuss in~\cite[Sect.~4 \& Thm.~4.2]{oxleywalsh2022-2modular}, the uniform matroid $U_{2, \g(\Delta, 2) + 1}$ is the unique rank two excluded minor for $\cM_\Delta$, and~$\cM_\Delta$ is closed under duality.
In view of these connections, \cref{thm:upper_bound_on_g_Delta2} and the computations done in~\cite{kriepkeSizeIntegerPrograms2025} solve their problem for small enough and large enough~$\Delta$ as follows:

\begin{corollary}
\label{cor:matroids}
For every $\Delta \in \Z_{>0}$ with $\Delta \leq 1\,550$ or $\Delta\geq 10^8$, the sizes of the unique rank two excluded minor $U_{2, \g(\Delta, 2) + 1}$ for~$\cM_\Delta$ and the unique corank two excluded minor $U_{\g(\Delta, 2) - 1, \g(\Delta, 2) + 1}$ for~$\cM_\Delta$ are determined by \cref{thm:upper_bound_on_g_Delta2} and~\cite[Sect.~4.4]{kriepkeSizeIntegerPrograms2025}.
\end{corollary}

\medskip
The paper is organized as follows.
In the next section, we collect some estimates on certain number theoretic summatory functions, that we need later in the main arguments.
\cref{sect:matrices_M_a_b} focusses on the two reductions of determining $\g(\Delta,2)$ to studying certain matrices $M(a,b)$ and then to estimating the optimal value of an associated linear program to bound the number of their columns.
Moreover, \cref{subsect:small-types} discusses extremal matrices $M(a,b)$ that realize the values of $\g(\Delta,2)$ given by the function~$\tilde \g(\Delta)$.
In \cref{sect:upper_bound_linear_program}, we describe a solution of the dual problem of that linear program in order to give column number bounds for matrices $M(a,b)$ with large entries, while \cref{sect:closing-gap} then considers a computational approach to deal with those $M(a,b)$ having small entries instead.
In \cref{sect:limitations}, we discuss the limitations of our approach towards closing the gap in the valid values of~$\Delta$ in \cref{thm:upper_bound_on_g_Delta2}, and why we chose the value~$10^8$ for the threshold.

\section{Auxiliary estimates on sums involving the totient function}\label{sect:number_theory}

In this section, we recall some estimates on basic functions in number theory which we need in the rest of the paper.

For integers $k$ and $a\leq b$ we denote by $\varphi(a,b,k)$ the number of integers in the interval $[a,b]$ that are coprime to $k$, so that $\varphi(1,k,k)=\varphi(k)$ is the usual Euler totient function.
We denote by~$\vartheta(k)$ the number of square-free divisors of~$k$ and by~$\tau(k)$ the number of unitary divisors of~$k$, that is, the number of divisors $d \mid k$ with $\gcd(d,k/d)=1$.
By considering the prime factorization of~$k$, one finds a bijection between the square-free divisors and the unitary divisors of~$k$, so that $\vartheta(k)=\tau(k)$.
It is known (cf.~\cite[Lem.~3.4]{cohen1960arithmeticalunitarydivisor} for the case $a=0$) that
\begin{equation}\label{eq:asymptotic_phi_a_b_k}
    \varphi(a,b,k) = \frac{\varphi(k)}{k}(b-a) + p(a,b,k)
\end{equation}
with the error term $\abs{p(a,b,k)}\leq \tau(k)$. For the convenience of the reader we give the general proof for all $a$. Recall that 
\begin{equation*}
    \sum_{d\mid k} \mu(d) = 
        \begin{cases}
            1, & k = 1 \\
            0, & \text{else}
        \end{cases}
\end{equation*}
where $\mu$ is the Möbius function.
This implies
\begin{align}
    \varphi(a,b,k) &= \sum_{n\in[a,b]} \sum_{d\mid \gcd(n,k)} \mu(d) \notag\\
        &= \sum_{d\mid k} \mu(d)\cdot\abs{\left\{n\in[a,b]: d\mid n\right\}} \notag\\
        &= \sum_{d\mid k} \mu(d)\cdot\left(\frac{b-a}{d} + c(a,b,d)\right) \notag\\
        &= (b-a)\sum_{d\mid k} \frac{\mu(d)}{d} + \sum_{d\mid k} \mu(d)c(a,b,d) \notag\\
        &= \frac{\varphi(k)}{k}(b-a) + \sum_{d\mid k} \mu(d)c(a,b,d) \,,\label{eqn:phi-a-b-k-exact}
\end{align}
where the error terms satisfy $\abs{c(a,b,d)}\leq 1$ and therefore 
\begin{equation*}
    \abs{\sum_{d\mid k} \mu(d)c(a,b,d)} \leq \sum_{d\mid k}\abs{\mu(d)} = \vartheta(k) = \tau(k) \,.
\end{equation*}
Furthermore from~\cite{cohenNumberUnitaryDivisors1960} we have
\begin{equation*}
    \sum_{k\leq x} \tau(k) = \frac{x}{\zeta(2)}\left(\log x + 2\gamma - 1 - \frac{2\zeta'(2)}{\zeta(2)}\right) +\mathcal{O}\left( \sqrt x\log x\right)
\end{equation*}
where $\zeta$ is the Riemann $\zeta$-function, $\zeta'$ is its derivative, $\gamma$ is Euler's constant, $x$ is a real number and the sum is over all positive integers $k\leq x$. 
By inspecting the proof of this asymptotic estimate in~\cite{cohenNumberUnitaryDivisors1960}, one can derive an explicit constant and get the bound
\begin{equation}\label{eq:implied_constant_sum_tau}
    \sum_{k\leq x} \tau(k) \leq \frac{x}{\zeta(2)}\left(\log x + 2\gamma - 1 - \frac{2\zeta'(2)}{\zeta(2)}\right) + 15\sqrt x\log x \,.
\end{equation}
However, for the sake of brevity we leave the details of this to the reader.
Later we also need the following estimates for sums involving the totient function:
\begin{align}
    \sum_{k\leq x} \varphi(k) = \frac{3}{\pi^2}x^2 + p(x) \quad &\textrm{ and } \quad \sum_{k\leq x} \frac{\varphi(k)}{k} = \frac{6}{\pi^2} x + q(x) \,,\label{eqn:phi-sums-exact}
\intertext{with the error bounds}
    \abs{p(x)} \leq x(\log x+2) \quad &\textrm{ and } \quad \abs{q(x)}\leq \log x+3 \,.\notag
\end{align}
%
Asymptotic forms of these results can be found in several books on analytic number theory, for example~\cite{apostolIntroductionAnalyticNumber1986,murtyProblemsAnalyticNumber2008}. 
The first result in~\eqref{eqn:phi-sums-exact} is Proposition 2.5.3 from Jameson~\cite{jamesonPrimeNumberTheorem2003} and the second one follows by repeating essentially the same argument.
In the literature one finds better bounds on the error term, see for instance Walfisz~\cite{walfiszWeylscheExponentialsummenNeueren1963} and Liu~\cite{liuEulersFunction2016}.
However, they are much more involved and not needed for our arguments.


Using \eqref{eqn:phi-sums-exact}, we obtain the following specification:

\begin{lemma}\label{lem:explicit_value_x0}
For every $\varepsilon > 0$, there exists an $x_0(\varepsilon)$ such that for all $x \geq x_0(\varepsilon)$, we have
    \begin{align}
        (1-\varepsilon)\cdot\frac{3}{\pi^2}x^2 \leq \sum_{k\leq x} \varphi(k) \leq (1+\varepsilon)\cdot\frac{3}{\pi^2}x^2 \label{eq:bound_eps_sum_phik}
    \intertext{and}
        (1-\varepsilon)\cdot\frac{6}{\pi^2}x \leq \sum_{k\leq x} \frac{\varphi(k)}{k} \leq (1+\varepsilon)\cdot\frac{6}{\pi^2}x \,. \label{eq:bound_eps_sum_phik/k}
    \end{align}
In particular, for $\varepsilon=0.001$, we can choose $x_0(0.001) = 1880$.
\end{lemma}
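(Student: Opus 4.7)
The lemma is essentially an immediate consequence of the two exact asymptotic identities recorded in \eqref{eqn:phi-sums-exact}, so the plan is to make the relative error explicit and then solve for the threshold $x_0(\varepsilon)$.

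First, for the inequality \eqref{eq:bound_eps_sum_phik}, I would rewrite $\sum_{k \leq x}\varphi(k) = \frac{3}{\pi^2}x^2 + p(x)$ as
\[
\frac{\sum_{k \leq x}\varphi(k)}{\frac{3}{\pi^2}x^2} = 1 + \frac{p(x)}{\frac{3}{\pi^2}x^2},
\]
so that both sides of \eqref{eq:bound_eps_sum_phik} hold as soon as $|p(x)|/\bigl(\frac{3}{\pi^2}x^2\bigr) \leq \varepsilon$. Using the error bound $|p(x)| \leq x(\log x + 2)$ from \eqref{eqn:phi-sums-exact}, this reduces to the elementary inequality
\[
\log x + 2 \;\leq\; \frac{3\varepsilon}{\pi^2}\,x,
\]
whose left-hand side grows logarithmically while the right-hand side grows linearly. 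Hence the inequality is satisfied for every sufficiently large $x$. Entirely analogously, \eqref{eq:bound_eps_sum_phik/k} reduces via $|q(x)| \leq \log x + 3$ to requiring
\[
\log x + 3 \;\leq\; \frac{6\varepsilon}{\pi^2}\,x,
\]
which again holds for all $x$ beyond some threshold. Since both threshold inequalities have the same general form, the final $x_0(\varepsilon)$ is simply the maximum of the two solutions.

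For the specific choice $\varepsilon = 0.001$, the second condition above is the binding one (because of the larger additive constant $3$ against the smaller slope $\frac{6\varepsilon}{\pi^2}$ compared with $\frac{3\varepsilon}{\pi^2}$ for the first), so I would verify numerically that the stated value $x_0(0.001) = 1880$ satisfies it, and then conclude the same for the first condition at the same $x_0$. The only genuinely technical obstacle is arithmetic: the stated error bounds $x(\log x + 2)$ and $\log x + 3$ seem a priori too weak to yield so small a threshold, so I would expect to need either the slightly sharper refinements of $p(x)$ and $q(x)$ cited from Walfisz \cite{walfiszWeylscheExponentialsummenNeueren1963} and Liu \cite{liuEulersFunction2016}, or else a direct numerical evaluation of $\sum_{k \leq 1880}\varphi(k)$ and $\sum_{k \leq 1880}\varphi(k)/k$ to check the claim at the exact threshold. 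Apart from this bookkeeping, there is no conceptual difficulty: the whole argument amounts to dividing \eqref{eqn:phi-sums-exact} by its leading terms and solving an elementary transcendental inequality.
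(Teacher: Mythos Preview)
Your approach is essentially identical to the paper's: reduce the two desired inequalities, via the explicit error bounds in~\eqref{eqn:phi-sums-exact}, to the elementary conditions $\log x + 2 \leq \tfrac{3\varepsilon}{\pi^2}x$ and $\log x + 3 \leq \tfrac{6\varepsilon}{\pi^2}x$, and then handle the small-$x$ regime for $\varepsilon=0.001$ by direct computation rather than by appealing to sharper asymptotics.

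Two small corrections are worth making. First, you have the comparison of the two conditions backwards: since $\tfrac{3\varepsilon}{\pi^2} < \tfrac{6\varepsilon}{\pi^2}$, it is the \emph{first} inequality (smaller slope, despite the smaller additive constant) that is binding. For $\varepsilon=0.001$ both conditions hold only once $x \geq 41568$ or so, and this threshold comes from the first one. Second, checking the sums numerically at the single value $x=1880$ does not suffice: you need the inequalities~\eqref{eq:bound_eps_sum_phik} and~\eqref{eq:bound_eps_sum_phik/k} to hold for \emph{every} $x$ in the range $1880 \leq x \leq 41567$ not covered by the analytic argument. This is exactly what the paper does---a short computer verification over that finite range---rather than invoking the much heavier Walfisz/Liu error terms, which the paper explicitly notes are unnecessary here.
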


\begin{proof}
In view of~\eqref{eqn:phi-sums-exact}, we just need to observe that the inequalities
\[
x(\log x+2) \leq \varepsilon \frac{3}{\pi^2}x^2
\quad\textrm{ and }\quad
\log x + 3  \leq \varepsilon \frac{6}{\pi^2}x
\]
hold for all $x \geq x_0(\varepsilon)$, for a suitably large value of $x_0(\varepsilon)$.

For $\varepsilon = 0.001$ in particular, they hold for all $x \geq 41568$.
In order to lower this value further, one may use a computer to directly check that the desired inequalities~\cref{eq:bound_eps_sum_phik,eq:bound_eps_sum_phik/k} hold for all $1880 \leq x \leq 41567$ as well.
Our code to do so can be found at \url{https://github.com/BKriepke/Generic-Delta-modular-with-two-rows}.
\end{proof}

\section{A reduction to a class of well-structured matrices}
\label{sect:matrices_M_a_b}

In this section we study a special family of matrices which we call matrices $M(a,b)$ of type $m$. These matrices have been introduced in \cite{kriepkeSizeIntegerPrograms2025} to give lower bound constructions for certain values of $\g(\Delta,2)$, see \cref{ex:familiesF1_to_F3}.
We show that every generic $\Delta$-modular matrix can, in some sense, be reduced to such a matrix $M(a,b)$. We then proceed to bound the number of columns of $M(a,b)$ by associating the matrix to a certain linear program, which we study further in \cref{sect:upper_bound_linear_program,sect:closing-gap}.

\begin{definition}\label{def:M_a_b}
    Let $m$ be a positive integer and let $a,b \in \Z^m$.
    By $M(a,b)$ we denote the matrix
    \begin{equation*}
        M(a,b) =
        \begin{pmatrix}
            0 &   1 & \cdots &   1 &   2 & \cdots &   2 &   3 & \cdots &   3 & \cdots &   m & \cdots &   m  \\
            1 & a_1 & \cdots & b_1 & a_2 & \cdots & b_2 & a_3 & \cdots & b_3 & \cdots & a_m & \cdots & b_m
        \end{pmatrix} \,,
    \end{equation*}
    where the dots in the second row represent all integers $j$ with $a_k\leq j\leq b_k$ and $\gcd(j,k)=1$.
    We call a matrix of this form a matrix of type $m$.
    We note that if $a_k > b_k$, then $M(a,b)$ has no column of the form $(k, j)^\intercal$ and if $a_k=b_k$ with $\gcd(k,a_k)=1$, then $M(a,b)$ has exactly one column of that form, namely $(k, a_k)^\intercal$.
\end{definition}

\begin{example}\label{ex:familiesF1_to_F3}
    We give three infinite families of matrices $M(a,b)$ which have the maximal number of columns of generic $\Delta$-modular matrices for infinitely many values of $\Delta$. These families have been introduced in~\cite{kriepkeSizeIntegerPrograms2025}.

    Let $m=1$ and $a=0, b=\Delta$. The matrix $M(a,b)$ given by 
    \begin{equation*}
        M(a,b) = 
            \begin{pmatrix}
                0 & 1 & 1 & 1 & \cdots & 1 \\
                1 & 0 & 1 & 2 & \cdots & \Delta 
            \end{pmatrix}
    \end{equation*}
    is generic $\Delta$-modular with $\Delta+2$ columns.

    Next, choose $m=2$ and $a=(0,\Delta)^\intercal, b=(\Delta,\Delta)^\intercal$. Then
    \begin{equation*}
        M(a,b) = 
            \begin{pmatrix}
                0 & 1 & 1 & 1 & \cdots & 1      & 2      \\
                1 & 0 & 1 & 2 & \cdots & \Delta & \Delta 
            \end{pmatrix}
    \end{equation*}
    is generic $\Delta$-modular for $\Delta$ odd and has $\Delta+3$ columns. 

    Finally, for $m=3$ we have two subfamilies:
    \begin{enumerate}[label=(\roman*)]
     \item If $\Delta = 12 s + 2$, for some $s \in \Z_{>0}$, let
     \begin{equation*}
        M(a,b) = 
        \begin{pmatrix}
            0 & 1 & \cdots &    1 &    2 & \cdots &     2 &    3 & \cdots &     3 \\
            1 & 0 & \cdots & 7s+1 & 4s+1 & \cdots & 10s+1 & 9s+1 & \cdots & 12s+2 
        \end{pmatrix}.
     \end{equation*}
     \item If $\Delta = 12 s + 8$, for some $s \in \Z_{>0}$, let
     \begin{equation*}
        M(a,b) =
        \begin{pmatrix}
            0 & 1 & \cdots &    1 &    2 & \cdots &     2 &    3 & \cdots &     3 \\
            1 & 0 & \cdots & 7s+5 & 4s+3 & \cdots & 10s+7 & 9s+7 & \cdots & 12s+8 
        \end{pmatrix}.
     \end{equation*}
    \end{enumerate}
    Both of these matrices are generic $\Delta$-modular and have $\Delta+4$ columns.

    These examples show that 
    \begin{equation*}
        \g(\Delta,2) \geq \tilde\g(\Delta) =
            \begin{cases}
                \Delta + 4 & ,\textrm{ if }\Delta \equiv 2 \bmod 6 \\
                \Delta + 3 & ,\textrm{ if }\Delta \equiv 1,3,5 \bmod 6 \\
                \Delta + 2 & ,\textrm{ if }\Delta \equiv 0,4 \bmod 6 \,.
            \end{cases}
    \end{equation*}
    as mentioned in~\cite{kriepkeSizeIntegerPrograms2025}. 
\end{example}

The reason that we can focus our study on matrices $M(a,b)$ is that roughly speaking any generic $\Delta$-modular matrix~$A$ can be turned into a matrix $M(a,b)$ of type $m$, for some number $m$, and which has at least as many columns as~$A$.
However, before we make this precise, we recall the concept of lattice-width, which allows us to give a useful bound on what the number $m$ can be.

For a non-zero vector $v \in \R^r \setminus \{\zero\}$ the \emph{width} of a convex body~$K \subseteq \R^r$ in direction~$v$ is defined as
\[
\omega(K,v) = \max_{x \in K} x^\intercal v - \min_{x \in K} x^\intercal v \,.
\]
Minimizing the width over all non-zero lattice directions yields the \emph{lattice-width}
\[
\omega_L(K) = \min_{v \in \Z^r \setminus \{\zero\}} \omega(K,v) \,.
\]
If $K$ is a \emph{lattice polytope}, meaning that $K = \conv(S)$, for a finite set $S \subseteq \Z^r$, then for $v \in \Z^r$ with $\gcd(v_1,\ldots,v_r) = 1$, the width of~$K$ in direction~$v$ can be understood as follows:
$\omega(K,v)+1$ is the number of parallel lattice-planes orthogonal to~$v$ and which have a non-empty intersection with~$K$.

\begin{lemma}[{cf.~\cite[Lemma 2.3]{kriepkeSizeIntegerPrograms2025}}]
    \label{lem:width}
    Let $A \subseteq \Z^2$ be a $\Delta$-modular point set and let $Q_A = \conv\{A \cup (-A)\}$.
    Then, the lattice-width of $Q_A$ is bounded as
    \[
    w_L(Q_A) \leq \sqrt{2 \pi} \sqrt\Delta \,.
    \]
\end{lemma}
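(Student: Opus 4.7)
The plan is to deduce the lattice-width bound from two ingredients: an area estimate $\vol(Q_A) \leq \pi \Delta$, and the general inequality $w_L(K) \leq \sqrt{2\vol(K)}$ valid for any centrally symmetric convex body $K \subseteq \R^2$. Multiplied together, they produce $w_L(Q_A)^2 \leq 2\pi\Delta$, which is the claim.

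For the area estimate, I would first observe that by bilinearity of the determinant the inequality $\abs{\det(u,v)} \leq \Delta$ extends from pairs $u,v \in A$ to arbitrary pairs of points of $Q_A = \conv\{A \cup (-A)\}$: expanding both points as convex combinations of elements of $A \cup (-A)$ and using linearity in each argument yields the same bound. Writing $\det(u,v) = \langle Ju, v \rangle$ with $J$ denoting the rotation by $\pi/2$, this exactly says $J Q_A \subseteq \Delta \cdot Q_A^\circ$, where $Q_A^\circ$ denotes the polar body of $Q_A$. Passing to areas yields $\vol(Q_A) \leq \Delta^2 \vol(Q_A^\circ)$, and the Blaschke-Santaló inequality $\vol(Q_A)\vol(Q_A^\circ) \leq \pi^2$ (applicable because $Q_A$ is centrally symmetric in $\R^2$) then forces $\vol(Q_A)^2 \leq \pi^2 \Delta^2$, hence $\vol(Q_A) \leq \pi \Delta$.

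For the general bound $w_L(K) \leq \sqrt{2\vol(K)}$, I would use the identity $w_L(K) = 2\lambda_1(K^\circ)$, valid for centrally symmetric $K$, combined with Minkowski's first theorem $\lambda_1(K^\circ)^2 \vol(K^\circ) \leq 4$ and Mahler's inequality $\vol(K)\vol(K^\circ) \geq 8$ for centrally symmetric planar bodies. Chaining these three inequalities yields $w_L(K)^2 = 4\lambda_1(K^\circ)^2 \leq 16/\vol(K^\circ) \leq 2\vol(K)$, as desired.

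The main conceptual step is the self-polarity observation $J Q_A \subseteq \Delta \cdot Q_A^\circ$, which translates the $\Delta$-modularity of $A$ into a clean geometric containment; after that the proof is just a careful combination of three classical inequalities from convex geometry (Minkowski, Mahler, Blaschke-Santaló). A naive containment-in-rectangle argument using a single longest vector of $Q_A$ gives only $\vol(Q_A) \leq 4\Delta$ and hence the weaker constant $2\sqrt{2}$ in place of $\sqrt{2\pi}$, so Blaschke-Santaló is genuinely needed to reach the stated bound.
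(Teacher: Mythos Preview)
Your proof is correct and reaches the same two intermediate conclusions as the paper --- the area bound $\vol(Q_A)\leq\pi\Delta$ and the width-area inequality $w_L(K)^2\leq 2\vol(K)$ --- but by genuinely different routes. For the area bound, the paper uses an inscribed-quadrilateral argument: Dowker's theorem guarantees an area-maximal symmetric quadrilateral $C=\conv\{\pm a_1,\pm a_2\}\subseteq Q_A$, Sas's theorem gives $\vol(C)\geq\frac{2}{\pi}\vol(Q_A)$, and $\Delta$-modularity bounds $\vol(C)\leq 2\Delta$. Your self-polarity observation $JQ_A\subseteq\Delta\cdot Q_A^\circ$ combined with Blaschke--Santal\'o is more conceptual and avoids the polygon-approximation machinery entirely. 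For the width-area inequality, the paper simply cites Makai's sharp result, whereas you derive it from $w_L(K)=2\lambda_1(K^\circ)$, Minkowski's first theorem, and Mahler's planar inequality; this is in fact a standard proof of Makai's theorem, so you have essentially unpacked the citation. Both approaches rely on nontrivial classical convex-geometry inputs of comparable depth; yours is arguably more self-contained, while the paper's makes the role of $\Delta$-modularity more tangible via the inscribed parallelogram.
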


\begin{proof}
    The proof is just a minor adjustment to the planar case of~\cite[Lemma 2.3]{kriepkeSizeIntegerPrograms2025}.
    We give the details where the argument differs:

    Let $C \subseteq Q_A$ be an area maximal quadrilateral contained in~$Q_A$.
    Since~$Q_A$ is centrally symmetric, a result of Dowker~\cite{dowker1944onminimum} shows that we may assume $C$ to be centrally symmetric as well and have its four vertices among the vertices of~$Q_A$.
    In particular, we may write $C = \conv\{\pm a_1,\pm a_2\}$, for some linearly independent $a_1,a_2 \in A$, implying $\vol(C) = 2\abs{\det(a_1,a_2)} \leq 2 \Delta$.
    Furthermore, Sas~\cite{sas1939uebereine} proved that there is always a quadrilateral contained in~$Q_A$ whose area is at least $\frac{2}{\pi}\vol(Q_A)$.

    We now combine these two results with a theorem of Makai Jr.~\cite[Theorem 3]{makai1978onthethinnest} who obtained a sharp upper bound on the lattice-width of centrally symmetric planar convex bodies, such as~$Q_A$, in terms of their volume.
    In particular, we obtain
    \[
    \omega_L(Q_A)^2 \leq 2 \vol(Q_A) \leq \pi \vol(C) \leq 2 \pi \Delta \,,
    \]
    from which the claimed inequality follows.
\end{proof}

We are now prepared to present our main reduction needed for our eventual proof of \cref{thm:upper_bound_on_g_Delta2}.

\begin{proposition}
    \label{prop:omnipresence_matrices_M_a_b}
    Let $\Delta \geq 2$ and let $A$ be a generic $\Delta$-modular matrix with two rows.
    Then, there is a positive integer $m \leq \sqrt{\pi/2} \sqrt\Delta$ and vectors $a,b\in\Z^m$ such that $M(a,b)$ is a generic $\Delta$-submodular matrix of type $m$ with at least as many columns as~$A$.
\end{proposition}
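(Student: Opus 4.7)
The plan is to transform $A$ into a well-structured matrix $M(a,b)$ by a short sequence of operations, each of which preserves genericity and the column count while not increasing any $2 \times 2$ minor in absolute value. The matrix $M(a,b)$ will then automatically have at least as many columns as $A$.

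First I would apply \cref{lem:width} to obtain a primitive direction $v \in \Z^2$ with $\omega(Q_A,v) \leq \sqrt{2\pi\Delta}$, and perform a unimodular change of basis $U \in \mathrm{GL}_2(\Z)$ with $U^\intercal e_1 = v$. After replacing $A$ by $UA$, every column $(k,j)$ satisfies $|k| \leq \sqrt{\pi\Delta/2}$, while all minors are preserved up to sign. Negating those columns with negative first coordinate, and those with first coordinate zero and negative second coordinate, pushes $A$ into the closed half-plane $\{k \geq 0\}$; genericity (no two original columns being anti-parallel) guarantees that no two columns coincide after this step. Then I would replace each column $(k,j)$ by its primitive representative $(k/d,j/d)$, where $d = \gcd(k,j)$; in particular, any column on the second axis becomes $(0,1)$. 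The resulting $A'$ has the same number of columns as $A$, is still generic (generic columns have pairwise distinct signed primitives), and remains $\Delta$-submodular since the minor of two primitive representatives divides the corresponding minor of $A$. Hence we may assume that $A$ itself consists of primitive vectors with first coordinates in $\{0, 1, \ldots, m\}$ for some $m \leq \sqrt{\pi\Delta/2}$.

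The construction of $M(a,b)$ is now direct. For each $k \in \{1, \ldots, m\}$, let $J_k$ denote the set of second coordinates of columns of $A$ with first coordinate $k$, and set $a_k = \min J_k$, $b_k = \max J_k$ (with $a_k > b_k$ when $J_k = \emptyset$). By primitivity every $j \in J_k$ is coprime to $k$, so $J_k \subseteq \{j \in [a_k, b_k] : \gcd(j,k) = 1\}$, whence $M(a,b)$ has at least as many columns with first coordinate $k$ as $A$ does, and the fixed column $(0,1)$ of $M(a,b)$ accounts for first coordinate $0$. This gives $|M(a,b)| \geq |A|$ and the bound $m \leq \sqrt{\pi/2}\sqrt{\Delta}$.

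The main obstacle is to verify that $M(a,b)$ is $\Delta$-submodular, because it can contain columns that were not in $A$. The key observation is that for any two first coordinates $k \neq k'$ with $k, k' \geq 1$, the minor $f(j,j') = kj' - k'j$ is linear in each of $j$ and $j'$, so its extremes on $[a_k,b_k] \times [a_{k'},b_{k'}]$ are attained at the four corners; by the definition of $a_k, b_k, a_{k'}, b_{k'}$, each corner corresponds to a pair of columns of $A$ and hence has minor bounded by $\Delta$, and linearity in each argument propagates the bound to all interior lattice points. The remaining cases $k = k'$ and $k = 0$ reduce to the elementary inequalities $k(b_k - a_k) \leq \Delta$ (itself a minor of $A$) and $m \leq \Delta$, the latter holding since $m \leq \sqrt{\pi\Delta/2} \leq \Delta$ for $\Delta \geq 2$.
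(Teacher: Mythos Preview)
Your proof is correct and follows essentially the same route as the paper's: apply the lattice-width bound of \cref{lem:width}, transform unimodularly so that first coordinates lie in $[-\sqrt{\pi\Delta/2},\sqrt{\pi\Delta/2}]$, flip signs and pass to primitive columns, read off $a_k,b_k$ as columnwise extremes, and verify $\Delta$-submodularity of $M(a,b)$ by observing that the bilinear form $kj'-k'j$ attains its extremes at corners already present in the transformed~$A$. The only cosmetic differences are that the paper adds the column $(0,1)^\intercal$ explicitly before passing to primitives (whereas you let the primitivisation and the definition of $M(a,b)$ handle it), and you spell out the cases $k=k'$ and $k=0$ in the submodularity check a bit more explicitly than the paper does.
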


\begin{proof}
    By \cref{lem:width} we know that there is a vector $v \in \Z^2 \setminus \{\zero\}$ such that there are at most $\sqrt{2 \pi}\sqrt\Delta + 1$ parallel lines orthogonal to~$v$, so that every point of $A \cup (-A)$ is contained in one of those lines, and by symmetry, these lines are distributed symmetrically around the origin.
    By applying a suitable unimodular transformation we can assume that $v=e_1$ is the first unit vector of~$\Z^2$.
    Denote by~$B$ the set of points obtained by applying this transformation to~$A$.
    Then the first coordinates of the points in~$B$ are between $-\sqrt{\pi/2}\sqrt{\Delta}$ and $\sqrt{\pi/2}\sqrt{\Delta}$. 
    Since~$B$ is generic, for every column $b \in B$ the vector $-b$ does not occur in~$B$.
    As multiplying columns by $-1$ does not change genericity, we may assume that all columns in~$B$ have non-negative first coordinate, and by doing this we do not repeat columns by the previous observation.
    Hence, the first coordinates of~$B$ are bounded by some number $m \leq  \sqrt{\pi/2}\sqrt{\Delta}$.

    If $B$ does not include the vector $(0,1)^\intercal$, we can simply add it, as the determinants all satisfy
        \begin{equation*}
            \abs{\det\begin{pmatrix}
                0 & c \\
                1 & d
            \end{pmatrix}}
            = \abs{-c} \leq m \leq \sqrt{\frac{\pi}{2}}\sqrt\Delta \leq \Delta \,,
        \end{equation*}
    since $\Delta \geq 2$.

    Next we can divide every column $(k,j)^\intercal$ by $\gcd(k,j)$. This does not change genericity and preserves $\Delta$-submodularity. Now $B$ has the form
        \begin{equation*}
            B = 
            \begin{pmatrix}
            0 &   1 & \cdots &   1 &   2 & \cdots &   2 &   3 & \cdots &   3 & \cdots &   m & \cdots &   m  \\
            1 & a_1 & \cdots & b_1 & a_2 & \cdots & b_2 & a_3 & \cdots & b_3 & \cdots & a_m & \cdots & b_m
            \end{pmatrix} \,,
        \end{equation*}
    where the $a_k,b_k$ denote the minimum, respectively maximum, of all $j$ with $(k,j)^\intercal \in B$.
    If there is no column with first entry $k$ then we say $a_k=1$ and $b_k=0$. 

    We can now read off the vectors $a,b\in\Z^m$ and consider the matrix $M(a,b)$.
    Note that $M(a,b)$ may contain more columns than~$B$, as~$B$ does not necessarily include every column of the form $(k,j)^\intercal$ with $a_k\leq j\leq b_k$ and $\gcd(k,j)=1$ for every $k=1,\ldots,m$.
    Clearly, including these columns does not change the genericity of the matrix.
    We show that including them also preserves $\Delta$-submodularity.
    For this let $1\leq k_1,k_2\leq m$ and $j_1,j_2$ with $a_{k_i}\leq j_i\leq b_{k_i}$ for $i=1,2$.
    Then 
    \begin{align*}
        \det
        \begin{pmatrix}
            k_1 & k_2 \\
            j_1 & j_2    
        \end{pmatrix}
        &= k_1 j_2 - k_2j_1 
        \leq k_1b_{k_2}-k_2a_{k_1}
        = \det 
        \begin{pmatrix}
            k_1 & k_2 \\
            a_{k_1} & b_{k_2}
        \end{pmatrix}
        \leq \Delta 
    \intertext{and}
        \det
        \begin{pmatrix}
            k_1 & k_2 \\
            j_1 & j_2    
        \end{pmatrix}
        &= k_1 j_2 - k_2j_1 
        \geq k_1a_{k_2}-k_2b_{k_1}
        = \det 
        \begin{pmatrix}
            k_1 & k_2 \\
            b_{k_1} & a_{k_2}
        \end{pmatrix}
        \geq -\Delta
    \end{align*}
    as the two $2\times 2$ matrices on the right are submatrices of $B$ and $B$ is $\Delta$-submodular.
\end{proof}

Our next goal is to associate a linear program to approach an upper bound on the number of columns of the matrices~$M(a,b)$.
To this end, let $M(a,b)$ be a generic $\Delta$-modular matrix of type $m$ and let $1\leq k,\ell\leq m$ with $b_k-a_k, b_\ell-a_\ell\geq 0$. Using the $\Delta$-modularity of $M(a,b)$, we obtain
\begin{equation*}
  \ell (b_k - a_k) + k (b_\ell - a_\ell)
  = \det\begin{pmatrix}
    \ell & k \\
    a_\ell & b_k
  \end{pmatrix}
  +
  \det\begin{pmatrix}
    k & \ell \\
    a_k & b_\ell
  \end{pmatrix}
  \leq 2 \Delta \,.
\end{equation*}
We set $x_k = \frac{b_k-a_k}{\Delta k}\geq 0$.
From the previous inequality we obtain 
\begin{equation*}
    x_k + x_\ell \leq \frac{2}{k\ell} \,.
\end{equation*}
We consider the following linear program:
\begin{align}
  \sum_{k=1}^m \varphi(k) x_k &\to \max   \notag \\
  x_k + x_\ell &\leq \frac{2}{k\ell} \qquad k, \ell=1, \ldots, m \label{lp:primal_version}\\
   x &\geq 0 \,. \notag
\end{align}
%


\begin{lemma}
\label{lem:opt_value_exists}
    The optimal value of~\cref{lp:primal_version} exists and is upper bounded by $\sum_{k=1}^m \frac{\varphi(k)}{k^2}$.
\end{lemma}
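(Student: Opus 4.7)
The plan is very short. Existence of the optimal value follows by observing that the feasible region of~\cref{lp:primal_version} is non-empty (the origin $x=0$ is feasible) and bounded, and the objective is continuous, so the maximum is attained on a compact polytope. For boundedness, specialize the constraint family to $k=\ell$: each such inequality reads $2x_k \leq 2/k^2$, which together with $x_k \geq 0$ forces every coordinate into the compact interval $[0, 1/k^2]$.

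For the numerical upper bound, I would just feed the bound $x_k \leq 1/k^2$ (which we have just derived from the diagonal constraints) directly into the objective. Since $\varphi(k) \geq 0$ for every $k$, we obtain
\[
\sum_{k=1}^m \varphi(k)\, x_k \;\leq\; \sum_{k=1}^m \varphi(k)\cdot\frac{1}{k^2} \;=\; \sum_{k=1}^m \frac{\varphi(k)}{k^2},
\]
which is valid for any feasible $x$ and in particular for the optimizer. Since this is obtained without invoking the off-diagonal constraints at all, no obstacle arises; the only subtlety worth mentioning is that the diagonal constraints alone already suffice, so the bound is crude in general and will later need to be sharpened (presumably through the dual approach announced for \cref{sect:upper_bound_linear_program}).
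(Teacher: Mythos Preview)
Your proposal is correct and follows essentially the same argument as the paper: specialize the constraint to $k=\ell$ to obtain $x_k\leq 1/k^2$, use this together with $x\geq 0$ to conclude compactness (and hence existence of the maximum), and then substitute the coordinate bounds into the objective. The only addition you make is the explicit remark that $x=0$ is feasible, which the paper leaves implicit.
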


\begin{proof}
    Let $x$ be any feasible solution of~\cref{lp:primal_version}.
    Setting $k=\ell$ in the constraint, we obtain that $x_k \leq 1/k^2$, for all $k=1,\ldots,m$, which together with the constraint $x \geq 0$ implies that the feasible set is compact.
    Hence, the optimal value of~\cref{lp:primal_version} exists and we have 
    \begin{equation*}
        \sum_{k=1}^m \varphi(k) x_k \leq \sum_{k=1}^m \frac{\varphi(k)}{k^2} \,,
    \end{equation*}
    as claimed.
\end{proof}

\begin{remark}
Although we could not make use of this, we mention in passing, that the polytope underlying the linear program~\eqref{lp:primal_version} is a dilate of an alcoved polytope of type~$C$.
These polytopes associated to root systems have a very rich combinatorial structure (see~\cite{lampostnikov2018alcovedpolytopesII}).
\end{remark}

It is known (see for example~\cite[Lemma 2.4]{cohen1960arithmeticaldivisor}) that
\begin{equation}\label{eq:asymptotic_phi_div_k^2}
    \sum_{k\leq x} \frac{\varphi(k)}{k^2} = \frac{1}{\zeta(2)}\left(\log x  - \frac{\zeta'(2)}{\zeta(2)}+\gamma \right) + \mathcal{O}\left(\frac{\log x}{x}\right) \,.
\end{equation}
In particular, the upper bound given in \cref{lem:opt_value_exists} tends to infinity for $m\to\infty$.
However, we later show that the optimal value of~\eqref{lp:primal_version} is instead upper bounded by~$1$ for all integers~$m$.
The motivation for showing this is the following key proposition.

\begin{proposition}
\label{prop:upper_bound_number_columns_version_2}
    Let $m$ be a positive integer and let $a,b\in\Z^m$ give a generic $\Delta$-submodular matrix $M(a,b)$ of type $m$.
    Let $z_m\in\R$ be the optimal value of~\cref{lp:primal_version} (which exists by \cref{lem:opt_value_exists}).
    Then the number of columns of $M(a,b)$ is bounded as
    \begin{equation*}
        \abs{M(a,b)}\leq z_m\Delta + \mathcal{O}(m\log m) \,.
    \end{equation*}
\end{proposition}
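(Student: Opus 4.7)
The plan is to count the columns of $M(a,b)$ directly, invoke the totient identity~\eqref{eq:asymptotic_phi_a_b_k} to convert each summand into the variables $(b_k-a_k)/(\Delta k)$, and then recognize the resulting expression as the objective of the linear program~\eqref{lp:primal_version} evaluated at a feasible point.

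First I would write $\abs{M(a,b)} = 1 + \sum_{k=1}^m \varphi(a_k,b_k,k)$, with the convention $\varphi(a_k,b_k,k)=0$ when $a_k > b_k$, and apply~\eqref{eq:asymptotic_phi_a_b_k} to bound each term as $\varphi(a_k,b_k,k) \leq \tfrac{\varphi(k)}{k}(b_k - a_k) + \tau(k)$. Setting $x_k := (b_k - a_k)/(\Delta k)$ when $a_k \leq b_k$ and $x_k := 0$ otherwise yields $x \geq 0$ and transforms the bound into
\[
  \abs{M(a,b)} \leq 1 + \Delta \sum_{k=1}^m \varphi(k)\, x_k + \sum_{k=1}^m \tau(k).
\]

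The central step is to verify that this $x$ is feasible for~\eqref{lp:primal_version}. For pairs $(k,\ell)$ with both $a_k \leq b_k$ and $a_\ell \leq b_\ell$, the inequality $x_k + x_\ell \leq 2/(k\ell)$ is exactly the $\Delta$-submodularity estimate $\ell(b_k-a_k)+k(b_\ell-a_\ell) \leq 2\Delta$ derived in the paragraph preceding~\eqref{lp:primal_version}. For the remaining LP constraints one may assume without loss of generality (by passing to $m' := \max\{k : a_k \leq b_k\}$, which does not affect the column count) that $m$ itself appears as a first entry of $M(a,b)$; then for any $\ell \leq m$ with $x_\ell = 0$, the constraint $x_k + x_m \leq 2/(km)$ already gives $x_k \leq 2/(km) \leq 2/(k\ell)$, and pairs with both coordinates vanishing are trivial. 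Hence $\sum_{k=1}^m \varphi(k) x_k \leq z_m$.

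Applying~\eqref{eq:implied_constant_sum_tau} to control $\sum_{k \leq m} \tau(k) = \mathcal{O}(m\log m)$ and combining the three estimates yields $\abs{M(a,b)} \leq z_m \Delta + \mathcal{O}(m\log m)$, as claimed. The most delicate ingredient is the LP-feasibility verification: the $\Delta$-submodularity directly delivers the LP constraints only for pairs of populated indices, so the reduction $m \mapsto \max\{k : a_k \leq b_k\}$ is what bridges the gap between the matrix-derived inequalities and the fully populated constraint system of~\eqref{lp:primal_version}.
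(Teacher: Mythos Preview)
Your argument follows the paper's proof step for step: write $|M(a,b)| = 1 + \sum_{k} \varphi(a_k,b_k,k)$, apply~\eqref{eq:asymptotic_phi_a_b_k}, package the main term as $\Delta$ times the objective of~\eqref{lp:primal_version} at $x_k = (b_k-a_k)/(\Delta k)$, and absorb $\sum_{k \leq m} \tau(k)$ via~\eqref{eq:implied_constant_sum_tau}. You are in fact more careful than the paper on the feasibility step: the paper simply asserts that this $x$ is feasible, while you correctly note that the derivation preceding~\eqref{lp:primal_version} only yields $x_k + x_\ell \leq 2/(k\ell)$ when both indices are populated, and you supply an argument for the remaining constraints.

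There is one wrinkle in that argument. Replacing $m$ by $m' := \max\{k : a_k \leq b_k\}$ leaves the column count unchanged and, once $a_m \leq b_m$, your verification of the mixed constraints via $x_k \leq x_k + x_m \leq 2/(km) \leq 2/(k\ell)$ is clean. But the reduction also replaces $(P_m)$ by $(P_{m'})$, so the bound you obtain is $z_{m'}\Delta + \mathcal{O}(m'\log m')$, and no inequality $z_{m'} \leq z_m$ is available to convert this back. In fact the proposition as literally stated fails without the hypothesis $a_m \leq b_m$: for $m \geq 4$ take $a_1=0$, $b_1=\Delta$, and $a_k > b_k$ for $k \geq 2$, so $|M(a,b)|=\Delta+2$; since $z_m \leq 0.999$ by \cref{lem:z_m_less_than_c_for_all_m}, the bound $z_m\Delta + \mathcal{O}(m\log m)$ is violated once $\Delta$ is large relative to~$m$. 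The paper's proof shares this silent gap, and both arguments are really establishing the bound under the tacit assumption that $m$ is populated --- which is harmless in the application, since one may always take $m$ to be the largest first entry actually present when invoking \cref{prop:omnipresence_matrices_M_a_b}.
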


\begin{proof}
    We start by defining two sets
    \begin{align*}
        P(a,b) &= \{k\in[m]: b_k-a_k\geq 0 \} \\
        N(a,b) &= \{k\in[m]: b_k-a_k < 0 \} \,,
    \end{align*}
    in other words, $k\in P(a,b)$ if $M(a,b)$ contains at least one column of the form $(k,j)^\intercal$ and $k\in N(a,b)$ otherwise. It follows that
    \begin{equation*}
        \abs{M(a,b)} = 1 + \sum_{k\in P(a,b)} \varphi(a_k,b_k,k)
    \end{equation*}
    where the $1$ at the beginning of the sum counts the column $(0,1)^\intercal$. 

    By the considerations above, we know that setting $x_k = (b_k-a_k)/(\Delta k)$ for $k\in P(a,b)$ and $x_k=0$ otherwise gives a feasible solution to the linear program \cref{lp:primal_version}.
    In particular, letting $x^*\in\R^m$ with entries $x_k^*$ be an optimal solution of \cref{lp:primal_version}, then 
    \begin{equation*}
        \sum_{k\in P(a,b)}\varphi(k)\frac{b_k-a_k}{k} = \Delta \sum_{k=1}^m \varphi(k)x_k \leq \Delta \sum_{k=1}^m \varphi(k) x_k^* = \Delta z_m \,.
    \end{equation*}
    In conclusion we obtain 
    \begin{align}
        \abs{M(a,b)} &= 1 + \sum_{k=1}^m \varphi(a_k,b_k,k) \notag \\
            &\leq 1+ \sum_{k\in P(a,b)} \left(\varphi(k) \frac{b_k-a_k}{k} + \tau(k)\right) \label{eq:use_error_bound_phi(a_b_k)}\\
            &\leq \sum_{k\in P(a,b)} \varphi(k)\frac{b_k-a_k}{k} + \left(1 + \sum_{k=1}^m \tau(k)\right) \notag \\
            &\leq z_m\Delta + 1+ \frac{m}{\zeta(2)}\left(\log m + 2\gamma - 1 - \frac{2\zeta'(2)}{\zeta(2)}\right) + 15\sqrt m\log m \label{eq:implied_constant_key_proposition} \\
            &= z_m\Delta + \mathcal{O}(m\log m) \notag 
    \end{align}
    where we used \cref{eq:asymptotic_phi_a_b_k,eq:implied_constant_sum_tau}.
\end{proof}

\subsection{The maximal size of the matrices \texorpdfstring{$M(a,b)$}{M(a,b)} for small \texorpdfstring{$m$}{m}}
\label{subsect:small-types}

For small values of~$k$ the asymptotic~\eqref{eq:asymptotic_phi_a_b_k} can be replaced by the identity~\eqref{eqn:phi-a-b-k-exact}.
Observe that in a matrix $M(a,b)$ we can always assume $a_k,b_k \not\equiv 0 \bmod k$, for every $2 \leq k \leq m$.
With this assumption we find that
\[
    c(a_1,b_1,1) = 1 \quad\textrm{ and }\quad c(a_2,b_2,2) = 0 \,,
\]
and $c(a_3,b_3,3)$ only depends on the values of $a_3,b_3 \bmod 3$ and can quickly be found by considering the few cases:
\begin{equation}\label{eq:values_c_a_b}
    c(a_3,b_3,3) = \begin{cases}
        \phantom{-}\frac13   &\textrm{, if } (a_3,b_3) \equiv (2,1) \bmod 3 \\
        \phantom{-}0         &\textrm{, if } (a_3,b_3) \equiv (1,1),(2,2) \bmod 3 \\
        -\frac13  &\textrm{, if } (a_3,b_3) \equiv (1,2) \bmod 3 \,.
    \end{cases}
\end{equation}
In general, we have $\abs{c(a,b,d)} \leq \frac{d-2}{d}$ if $a,b \not\equiv 0 \bmod d$.
Therefore, we obtain
\begin{align}
    \varphi(a_1,b_1,1) &= b_1 - a_1 + 1 \notag\\
    \varphi(a_2,b_2,2) &= \frac{1}{2}(b_2-a_2) + c(a_2,b_2,1) - c(a_2,b_2,2) = \frac{1}{2}(b_2-a_2) + 1 \label{eqn:three-varphis}\\
    \varphi(a_3,b_3,3) &= \frac{2}{3}(b_3-a_3) + c(a_3,b_3,1) - c(a_3,b_3,3) \leq \frac{2}{3}(b_3-a_3) + \frac43 \,. \notag
\end{align}
Furthermore the optimal value of \cref{lp:primal_version} can be computed for small $m$, giving $z_m=1$ for $m=1,2,3$. Repeating the proof of \cref{prop:upper_bound_number_columns_version_2} in these cases gives:

\begin{corollary}
    \label{cor:upper_bound_m_1_2_3}
    Let $m \in \{1,2,3\}$ and let $a,b \in \Z^m$ give a generic $\Delta$-submodular matrix $M(a,b)$ of type $m$.
    Then
    \begin{equation*}
        \abs{M(a,b)} \leq 
            \begin{cases}
                \Delta + 2 & , \ m=1 \\
                \Delta + 3 & , \ m=2 \\
                \Delta + 4 & , \ m=3 \,.
            \end{cases}
    \end{equation*}
\end{corollary}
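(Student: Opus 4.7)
My strategy is to reuse the proof of \cref{prop:upper_bound_number_columns_version_2} essentially verbatim, but to substitute the sharp expressions \cref{eqn:three-varphis} for $\varphi(a_k,b_k,k)$, which are available for $k \in \{1,2,3\}$, in place of the crude universal error $\tau(k)$. Since the text preceding the corollary already asserts $z_m = 1$ for each $m \in \{1,2,3\}$, the only remaining work is a careful bookkeeping of the constant terms.

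Concretely, I would set $P(a,b) = \{k \in [m] : b_k - a_k \geq 0\}$ and define $x_k = (b_k-a_k)/(\Delta k)$ for $k \in P(a,b)$ and $x_k = 0$ otherwise. The derivation preceding \cref{lp:primal_version} shows that $x$ is feasible for \cref{lp:primal_version}, so
\[
    \sum_{k=1}^m \varphi(k)\, x_k \leq z_m = 1.
\]
Using \cref{eqn:three-varphis}---with the standing assumption $a_k, b_k \not\equiv 0 \bmod k$ for $k \geq 2$---I would bound, for every $k \in P(a,b) \cap [m]$,
\[
    \varphi(a_k,b_k,k) \leq \Delta\,\varphi(k)\, x_k + c_k,
\]
with $c_1 = c_2 = 1$ and $c_3 = 4/3$. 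Summing these inequalities and including the contribution of the column $(0,1)^\intercal$ then yields
\[
    \abs{M(a,b)} \leq 1 + \Delta \sum_{k=1}^m \varphi(k)\, x_k + \sum_{k=1}^m c_k \leq \Delta + 1 + \sum_{k=1}^m c_k.
\]

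Evaluating for each value of $m$ delivers $\abs{M(a,b)} \leq \Delta + 2$ when $m = 1$, $\abs{M(a,b)} \leq \Delta + 3$ when $m = 2$, and $\abs{M(a,b)} \leq \Delta + 13/3$ when $m = 3$; since $\abs{M(a,b)}$ and $\Delta$ are integers, the last bound sharpens to $\abs{M(a,b)} \leq \Delta + 4$. There is no real obstacle beyond the careful bookkeeping, the only mildly delicate point being the case analysis of \cref{eq:values_c_a_b} that secures the constant $c_3 = 4/3$ rather than a cruder value.
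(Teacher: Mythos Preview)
Your proposal is correct and follows essentially the same approach as the paper: reuse the argument of \cref{prop:upper_bound_number_columns_version_2} with the exact expressions \cref{eqn:three-varphis} in place of the error $\tau(k)$, invoke $z_m = 1$, and finish by rounding the $m=3$ bound $\Delta + 13/3$ down to $\Delta + 4$. The only cosmetic difference is that the paper uses the symbol~$c_k$ for the indicator of $k \in P(a,b)$ rather than for the additive constants $1,1,\tfrac43$, but the computation is identical.
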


\begin{proof}
    The proof is the same as for \cref{prop:upper_bound_number_columns_version_2} with only minor adjustments.
    In particular, due to~\eqref{eqn:three-varphis}, inequality~\cref{eq:use_error_bound_phi(a_b_k)} now reads
    \begin{align*}
        \abs{M(a,b)} \leq 
            \begin{cases}
                1 + c_1\left(b_1-a_1 + 1\right) &,\ m=1 \\
                1 + c_1\left(b_1-a_1 + 1\right) + c_2\left(\frac{b_2-a_2}{2} + 1\right) &,\ m=2 \\
                1 + c_1\left(b_1-a_1 + 1\right) + c_2\left(\frac{b_2-a_2}{2} + 1\right) + c_3\left(\frac23(b_3-a_3) + \frac43\right) &,\ m=3
            \end{cases}
    \end{align*}
    where $c_k=1$ if $k\in P(a,b)$ and $c_k=0$ otherwise for $k=1,2,3$. The result follows with $z_m=1$ for $m=1,2,3$ and using that $\abs{M(a,b)}$ is an integer.
\end{proof}

\subsubsection{Matrices \texorpdfstring{$M(a,b)$}{M(a,b)} of type \texorpdfstring{2}{2}}

In this section, we improve \cref{cor:upper_bound_m_1_2_3} in the case $m=2$.
We have seen in \cref{ex:familiesF1_to_F3} that for odd $\Delta$ there is a generic $\Delta$-modular matrix of type $2$ with $\Delta+3$ columns, meeting the upper bound of \cref{cor:upper_bound_m_1_2_3}. Here we want to show that for even $\Delta$ this bound is not sharp and we can achieve at most $\Delta+2$ columns.

For the sake of contradiction, let $\Delta$ be even and assume that there is a generic $\Delta$-submodular matrix $A=M(a,b)$ of type~$2$ with $\Delta+3$ columns.
Then
\begin{equation*}
    A = \begin{pmatrix}
        0 &   1 & \cdots &   1 &   2 & \cdots &   2 \\
        1 & a_1 & \cdots & b_1 & a_2 & \cdots & b_2
    \end{pmatrix} \in \Z^{2\times(\Delta+3)} \,.
\end{equation*}
By subtracting the first row from the second row we can assume that $a_1=0$.
Furthermore $a_2\leq b_2$ and both $a_2,b_2$ need to be odd.
The matrix~$A$ has 
\begin{equation*}
    1 + (b_1+1) + \frac{b_2-a_2}{2}+1 = b_1 + \frac{b_2-a_2}{2} + 3 \overset{!}{=} \Delta + 3
\end{equation*}
columns, hence $2b_1 + b_2-a_2= 2\Delta$.
The condition that $A$ is $\Delta$-submodular implies that 
\[
    \det\begin{pmatrix}
    1 &   2 \\
    0 & b_2 
    \end{pmatrix}
    = b_2 \leq \Delta
    \quad\textrm{ and }\quad
    \det\begin{pmatrix}
    1 &   2 \\
    b_1 & a_2 
    \end{pmatrix}
    = a_2 - 2b_1 \geq -\Delta \,.
\]
Adding $2\Delta=2b_1+b_2-a_2$ to the second inequality, we obtain 
\begin{equation*}
\Delta \leq a_2-2b_1 + 2\Delta = a_2 - 2b_1 + 2b_1 + b_2 - a_2 = b_2 \,,
\end{equation*}
and combining this with the first inequality we get $b_2=\Delta$. However, $b_2$ is odd while~$\Delta$ is even, leading to a contradiction. We have therefore shown the following:

\begin{claim}\label{claim:matrices_type_2}
Let $a,b\in\Z^2$ give a generic $\Delta$-submodular matrix $M(a,b)$ of type $2$. Then
    \begin{equation*}
        \abs{M(a,b)}\leq 
            \begin{cases}
                \Delta + 3 & ,\textrm{ if }\Delta \text{ is odd} \\
                \Delta + 2 & ,\textrm{ if }\Delta \text{ is even} \,.
            \end{cases}
    \end{equation*}
\end{claim}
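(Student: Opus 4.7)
My plan is to split the claim along the parity of $\Delta$. For odd $\Delta$, the bound $\abs{M(a,b)} \leq \Delta + 3$ is already the content of \cref{cor:upper_bound_m_1_2_3} applied to $m = 2$, so there is nothing new to prove. The interesting case is even $\Delta$, where I need to rule out the value $\Delta + 3$.

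For this, I would argue by contradiction. Suppose $A = M(a,b)$ is a generic $\Delta$-submodular matrix of type $2$ with exactly $\Delta + 3$ columns. Subtracting the first row from the second is a unimodular operation that preserves all $2 \times 2$ determinants and the structure of an $M(a,b)$-matrix of type $2$, so I may normalize $a_1 = 0$. The coprimality requirement in \cref{def:M_a_b} forces the columns of the form $(2,j)^\intercal$ to have odd second entry, hence $a_2$ and $b_2$ are both odd. Writing out the column count,
\[
    \abs{A} = 1 + (b_1 + 1) + \frac{b_2 - a_2}{2} + 1 = b_1 + \frac{b_2 - a_2}{2} + 3 \overset{!}{=} \Delta + 3 \,,
\]
which rearranges to $2b_1 + b_2 - a_2 = 2\Delta$.

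The key step is then to squeeze $b_2$ from both sides using $\Delta$-submodularity of the two extremal minors, namely the determinants of $\left(\begin{smallmatrix} 1 & 2 \\ 0 & b_2 \end{smallmatrix}\right)$ and $\left(\begin{smallmatrix} 1 & 2 \\ b_1 & a_2 \end{smallmatrix}\right)$. The first gives $b_2 \leq \Delta$, while the second gives $a_2 - 2b_1 \geq -\Delta$. Adding the identity $2b_1 + b_2 - a_2 = 2\Delta$ to the second inequality collapses it to $b_2 \geq \Delta$, so $b_2 = \Delta$; this contradicts the parity, since $b_2$ is odd while $\Delta$ is even.

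I do not expect a serious obstacle here; the only point that merits a sanity check is that the reduction to $a_1 = 0$ does not change genericity or the column count, and that both type-$2$ ``slots'' are indeed occupied (so the column counting formula is valid), which is automatic as soon as $\abs{A}$ equals $\Delta + 3 \geq 5$ — if either slot were empty, the bound of \cref{cor:upper_bound_m_1_2_3} for $m = 1$ would already give at most $\Delta + 2$ columns.
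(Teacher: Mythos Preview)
Your proposal is correct and follows essentially the same argument as the paper: normalize to $a_1=0$, count columns to get $2b_1+b_2-a_2=2\Delta$, use the two extremal $2\times 2$ minors to squeeze $b_2=\Delta$, and obtain a parity contradiction. Your added sanity check that both slots must be occupied (else the $m=1$ bound already applies) is a small clarification the paper leaves implicit.
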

We remark that this bound is sharp, because if $\Delta$ is even, then $\Delta'=\Delta-1$ is odd, meaning we can use the construction from \cref{ex:familiesF1_to_F3} giving a generic $\Delta'$-modular (and therefore also generic $\Delta$-submodular) matrix
\begin{equation*}
    \begin{pmatrix}
        0 & 1 & \cdots &      1 &      2 \\
        1 & 0 & \cdots & \Delta' & \Delta'
    \end{pmatrix}
    =
    \begin{pmatrix}
        0 & 1 & \cdots &        1 &        2 \\
        1 & 0 & \cdots & \Delta-1 & \Delta-1
    \end{pmatrix}
\end{equation*}
with $\Delta'+3=(\Delta-1)+3=\Delta+2$ columns.
Furthermore, in the case that $\Delta=4n$
\begin{equation*}
    \begin{pmatrix}
        0 & 1 & \cdots &    1 &    2 & \cdots &    2 \\
        1 & 0 & \cdots & 3n-1 & 2n-1 & \cdots & 4n-1
    \end{pmatrix}
    =
    \begin{pmatrix}
        0 & 1 & \cdots &               1 &               2 & \cdots &        2 \\
        1 & 0 & \cdots & \frac34\Delta-1 & \frac12\Delta-1 & \cdots & \Delta-1
    \end{pmatrix}
\end{equation*}
is a generic $\Delta$-modular matrix with $\Delta+2$ columns.
In fact it can be shown that this is the only such matrix of type~$2$; however, this is not relevant for our main results, so we leave out the details.

\subsubsection{Matrices \texorpdfstring{$M(a,b)$}{M(a,b)} of type \texorpdfstring{3}{3}}

Here we prove the analogous result to \cref{claim:matrices_type_2} for matrices $M(a,b)$ of type $3$:

\begin{claim}\label{claim:matrices_type_3}
    Let $a,b\in\Z^3$ give a generic $\Delta$-submodular matrix of type $3$. Then
    \begin{equation*}
        \abs{M(a,b)}\leq 
            \begin{cases}
                \Delta + 4 & ,\textrm{ if }\Delta \equiv 2 \bmod 6 \\
                \Delta + 3 & ,\textrm{ if }\Delta \equiv 1,3,5 \bmod 6 \\
                \Delta + 2 & ,\textrm{ if }\Delta \equiv 0,4 \bmod 6 \,.
            \end{cases}
    \end{equation*}
\end{claim}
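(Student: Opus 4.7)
The plan is a case analysis based on the arithmetic slacks in the $\Delta$-submodularity constraints. A preliminary reduction handles the cases where $P(a,b) \subsetneq \{1,2,3\}$: if $3 \notin P(a,b)$, the claim follows from \cref{claim:matrices_type_2}; if $3 \in P(a,b)$ but $1$ or $2$ is absent, a direct LP estimate using only the active constraints yields a bound $c \Delta + O(1)$ with $c < 1$, well below $\Delta + 2$ for $\Delta$ large (with finitely many small cases verified directly). So we may assume $P(a,b) = \{1,2,3\}$ and, by translating the second row, $a_1 = 0$. Using the exact formulas from~\cref{eqn:three-varphis},
\[
\abs{M(a,b)} = 4 + F - c_3, \quad F := b_1 + \frac{b_2 - a_2}{2} + \frac{2(b_3 - a_3)}{3}, \quad c_3 := c(a_3, b_3, 3) \in \left\{-\tfrac{1}{3}, 0, \tfrac{1}{3}\right\}.
\]
Combining the four $\Delta$-submodularity inequalities $3b_1 - a_3 \leq \Delta$, $b_3 \leq \Delta$, $3b_2 - 2a_3 \leq \Delta$, $2b_3 - 3a_2 \leq \Delta$ with weights $\tfrac{1}{3}, \tfrac{1}{3}, \tfrac{1}{6}, \tfrac{1}{6}$ yields $F \leq \Delta$. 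Writing $p_1 := \Delta - 3b_1 + a_3$, $p_2 := \Delta - b_3$, $q_1 := \Delta - 3b_2 + 2a_3$, $q_2 := \Delta - 2b_3 + 3a_2$ (all non-negative integers), and $p := p_1 + p_2$, $q := q_1 + q_2$, we get $\Delta - F = p/3 + q/6$, so $2p + q = 6(\Delta - F) \in \Z_{\geq 0}$.

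For the case $\abs{M(a,b)} = \Delta + 4$ we need $F = \Delta + c_3$; since $F \leq \Delta$, $c_3 = \tfrac{1}{3}$ is immediately impossible. For $c_3 = 0$, $F = \Delta$ forces $p = q = 0$, and then $a_2 = \Delta/3$ requires $3 \mid \Delta$, contradicting $\gcd(b_3, 3) = \gcd(\Delta, 3) = 1$. For $c_3 = -\tfrac{1}{3}$ (so $(a_3, b_3) \equiv (1, 2) \bmod 3$), $2p + q = 2$ leaves $(p, q) \in \{(0, 2), (1, 0)\}$. In $(0, 2)$: $p_1 = p_2 = 0$ give $b_3 = \Delta$ and $a_3 = 3b_1 - \Delta$, hence $\Delta \equiv 2 \bmod 3$; among the three non-negative integer solutions of $q_1 + q_2 = 2$, only $(q_1, q_2) = (1, 1)$ yields an integer $a_2 = (\Delta + 1)/3$, and the requirement that $a_2$ is odd then forces $\Delta \equiv 2 \bmod 6$. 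In $(1, 0)$: $q_1 = q_2 = 0$ force $\Delta$ odd and $\Delta \equiv 1 \bmod 3$ (so $\Delta \equiv 1 \bmod 6$), but both decompositions $(p_1, p_2) \in \{(0, 1), (1, 0)\}$ lead to residue contradictions (the first to $3 \mid \Delta$; the second to $b_3 = \Delta \equiv 2 \bmod 3$, contradicting $\Delta \equiv 1 \bmod 3$). Thus $\abs{M(a,b)} = \Delta + 4$ implies $\Delta \equiv 2 \bmod 6$, which simultaneously yields $\abs{M(a,b)} \leq \Delta + 3$ for $\Delta \equiv 1, 3, 5 \bmod 6$.

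For $\Delta \equiv 0, 4 \bmod 6$ we additionally rule out $\abs{M(a,b)} = \Delta + 3$. Here $2p + q \in \{4, 6, 8\}$ according to $c_3 \in \{\tfrac{1}{3}, 0, -\tfrac{1}{3}\}$. The key parity observation is that since $\Delta$ is even while $b_2, a_2$ are odd, both $q_1 = \Delta - 3b_2 + 2a_3$ and $q_2 = \Delta - 2b_3 + 3a_2$ are odd, so $q \geq 2$ and $q$ is even—pruning roughly half of the feasible $(p, q)$ pairs. Enumerating the remaining pairs together with the decompositions $(p_1, p_2, q_1, q_2)$ yields about a dozen configurations. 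For each configuration a short calculation produces a contradiction of one of four types: (i) $p_2 = 0$ forces $b_3 = \Delta$, incompatible with $\gcd(b_3, 3) = 1$ when another derived equation requires $3 \mid \Delta$; (ii) a derived equation $3 a_2 = \Delta + c$ or $3 b_2 = \Delta + c$ fails integrality modulo $3$; (iii) the value of $c_3$ together with the derived residue of $a_3 \bmod 3$ forces $\Delta \equiv 2 \bmod 3$, contradicting $\Delta \equiv 0, 4 \bmod 6$; or (iv) a derived equation directly yields $a_3 \equiv 0 \bmod 3$ or $b_3 \equiv 0 \bmod 3$, violating coprimality. Hence $\abs{M(a,b)} \leq \Delta + 2$ for $\Delta \equiv 0, 4 \bmod 6$.

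The main obstacle is the bookkeeping in the final step: roughly a dozen $(p_1, p_2, q_1, q_2)$-configurations must be independently ruled out, with no uniform single argument covering them all. Fortunately, each case falls into one of the four listed contradiction types, so organizing the proof as a table indexed by $(p, q)$ keeps the analysis finite and transparent.
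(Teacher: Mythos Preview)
Your approach mirrors the paper's: the same four slack variables (your $p_1,p_2,q_1,q_2$ are the paper's $e_2,e_1,e_4,e_3$), the same key identity $2p+q = 6\bigl((4-d)-c_3\bigr)$, and essentially the same five-case elimination for $d=4$. Two minor remarks: first, your explicit treatment of the case $P(a,b)\subsetneq\{1,2,3\}$ is more careful than the paper, which tacitly assumes all three rows are active; second, for $d=3$ the paper simply defers to a short computer check modulo~$6$, whereas your parity observation that $q_1$ and $q_2$ are both odd when $\Delta$ is even is a genuine simplification the paper does not make---however, your estimate of ``about a dozen'' surviving configurations is optimistic, since after the parity pruning there remain roughly $34$ quadruples $(p_1,p_2,q_1,q_2)$ across $c_3\in\{-\tfrac13,0,\tfrac13\}$, so the table is longer than advertised (each entry does nonetheless fall into one of your four contradiction types).
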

As by \cref{cor:upper_bound_m_1_2_3} we always have $\abs{M(a,b)}\leq \Delta+4$, it suffices to show that for $\Delta\equiv 1,3,5 \bmod 6$ we cannot obtain $\Delta+4$ columns and that for $\Delta\equiv 0,4\bmod 6$ we cannot get $\Delta+3$ or $\Delta+4$ columns.
With this in mind, let $\Delta \not\equiv 2 \bmod 6$ and 
\begin{equation*}
    A = M(a,b) = \begin{pmatrix}
        0 & 1 & \cdots &   1 &   2 & \cdots &   2 &   3 & \cdots &   3 \\
        1 & a_1 & \cdots & b_1 & a_2 & \cdots & b_2 & a_3 & \cdots & b_3 
    \end{pmatrix}
\end{equation*}
be a generic $\Delta$-submodular matrix, where we can again assume that $a_1 = 0$.
In view of~\eqref{eqn:three-varphis}, the number of columns of $A$ is given by
\begin{equation*}
    1 + (b_1-a_1+1) + \left(\frac{b_2-a_2}{2}+1\right) + \left(\frac{2}{3}(b_3-a_3) + 1 - c(a_3,b_3,3)\right) \overset{!}{=} \Delta + d \,,
\end{equation*}
where the values of $c(a_3,b_3,3)$ are given in \cref{eq:values_c_a_b} and $d\in\{3,4\}$.
We want to show that if $\Delta$ is odd, then $d=4$ is impossible, and if $\Delta \equiv 0,4 \bmod 6$ then $d=3,4$ are both impossible.
First, subtract $4$ on both sides and multiply by $6$ to get 
\begin{equation}\label{eq:equation_for_6Delta}
    6b_1 + 3(b_2-a_2) + 4(b_3-a_3) - 6c(a_3,b_3,3) = 6\Delta + 6(d-4) \,.
\end{equation}
As $A$ is $\Delta$-submodular, we have
\[
    \det\begin{pmatrix}
    1 &   3 \\
    0 & b_3 
    \end{pmatrix}
    = b_3 \leq \Delta
    \quad,\quad
    \det\begin{pmatrix}
    3 &   1 \\
    a_3 & b_1  
    \end{pmatrix}
    = 3b_1-a_3 \leq \Delta
\]
and
\[
    \det\begin{pmatrix}
    2 &   3 \\
    a_2 & b_3 
    \end{pmatrix}
    = 2b_3-3a_2 \leq \Delta
    \quad,\quad
    \det\begin{pmatrix}
    3 &   2 \\
    a_3 & b_2 
    \end{pmatrix}
    = 3b_2-2a_3 \leq \Delta \,. 
\]
Now, introduce integer variables $e_i\geq 0$ for $i=1,2,3,4$ so that
\begin{align*}
    b_3 + e_1 &= \Delta \\
    3b_1 - a_3 + e_2 &= \Delta \\
    2b_3 - 3a_2 + e_3 &= \Delta \\
    3b_2 - 2a_3 + e_4 &= \Delta \,.
\end{align*}
Multiplying the first two equations by $2$ and then adding up all equations leads to 
\begin{equation*}
    6b_1 + 3(b_2-a_2) + 4(b_3-a_3) + 2e_1+2e_2+e_3+e_4 = 6\Delta
\end{equation*}
so that combining with \cref{eq:equation_for_6Delta} gives
\begin{equation}\label{eq:equation_for_e_i}
    2e_1 + 2e_2 + e_3 + e_4 = 6 \left((4 - d) - c(a_3,b_3,3)\right) \,.
\end{equation}
\cref{claim:matrices_type_3} is now implied by the following claim:

\begin{claim}
    There are no integer solutions to the system of equations
    \begin{align*}
        b_3 + e_1 &= \Delta\\
        3b_1 - a_3 + e_2 &= \Delta \\
        2b_3 - 3a_2 + e_3 &= \Delta \\
        3b_2 - 2a_3 + e_4 &= \Delta \\
        2e_1 + 2e_2 + e_3 + e_4 &= 6 \left((4 - d) - c(a_3,b_3,3)\right) \,,
    \end{align*}
    with $a_2,b_2$ odd, $a_3,b_3$ not divisible by $3$, $e_i\geq 0$ and 
    \begin{itemize}
        \item $\Delta \equiv 0,4 \bmod 6$ and $d=3,4$, or
        \item $\Delta \equiv 1,3,5 \bmod 6$ and $d=4$.
    \end{itemize}
\end{claim}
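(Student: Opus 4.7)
The strategy is a finite case analysis on $(d, c, \delta)$, where $c := c(a_3,b_3,3) \in \{-1/3, 0, 1/3\}$ and $\delta := \Delta \bmod 6$. Solving the first four equations of the system gives
\[
    b_3 = \Delta - e_1, \quad a_3 = 3b_1 - \Delta + e_2, \quad 3a_2 = \Delta - 2e_1 + e_3, \quad 3b_2 = -\Delta + 6b_1 + 2e_2 - e_4,
\]
so the fifth equation becomes $2e_1 + 2e_2 + e_3 + e_4 = R$ with $R := 6((4-d) - c) \in \{-2, 0, 2, 4, 6, 8\}$. The case $R = -2$, occurring only for $(d, c) = (4, 1/3)$, is immediate as $e_i \geq 0$.

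The remaining constraints translate into congruences. Integrality and oddness of $a_2, b_2$ are equivalent to $3a_2 \equiv 3b_2 \equiv 3 \pmod 6$, giving
\[
    e_3 - 2e_1 \equiv 3 - \delta \pmod 6 \qquad\text{and}\qquad 2e_2 - e_4 \equiv 3 + \delta \pmod 6 \,.
\]
The prescribed residues of $a_3, b_3$ modulo~$3$, read off from~\eqref{eq:values_c_a_b} (and in particular both nonzero), pin down $e_1, e_2 \pmod 3$ via $b_3 \equiv \delta - e_1$ and $a_3 \equiv e_2 - \delta \pmod 3$.

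For each remaining $(d, c)$-combination with the relevant residue class (namely $\delta \in \{0, 4\}$ when $d = 3$ and $\delta \in \{1, 3, 5\}$ when $d = 4$), my claim is that the congruences above force the minimum admissible value of $2e_1 + 2e_2 + e_3 + e_4$ to strictly exceed $R$. Sample verifications: when $(d, c) = (4, 0)$ we have $R = 0$, so $e_i = 0$ for all~$i$; the first congruence then forces $\delta \equiv 3 \pmod 6$, but $b_3 = \Delta \equiv 0 \pmod 3$ contradicts $b_3 \not\equiv 0 \pmod 3$. When $(d, c) = (4, -1/3)$ and $\delta = 3$, the residue constraints on $a_3, b_3$ force $e_1, e_2 \geq 1$, so $2e_1 + 2e_2 \geq 4 > 2 = R$. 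When instead the mod-$3$ constraints allow $e_1 = e_2 = 0$ (for example $(d, c, \delta) = (3, 1/3, 4)$), the first congruence then forces $e_3 \equiv 5 \pmod 6$, hence $e_3 \geq 5 > R$. The only real obstacle is the tediousness of verifying all roughly fifteen subcases obtained by splitting $c = 0$ into the two residue patterns $(1,1)$ and $(2,2)$ and by ranging $\delta$ over its admissible values; each subcase reduces to a short elementary inequality of exactly this form, and no conceptual difficulty remains.
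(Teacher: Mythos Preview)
Your approach is correct and is essentially the same modular case analysis as the paper's, but organized more systematically: where the paper directly enumerates the handful of admissible $e$-vectors for $d=4$ and then defers the entire $d=3$ case to a computer program, you first extract the congruences $e_1,e_2\pmod 3$ (from the prescribed residues of $a_3,b_3$) and $e_3-2e_1,\ 2e_2-e_4\pmod 6$ (from the oddness of $a_2,b_2$), and then argue in each subcase that these force $2e_1+2e_2+e_3+e_4>R$. This is a genuine improvement in presentation, since it renders the $d=3$ case human-checkable with no more effort than $d=4$.

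One small slip: when you list ``$\delta\in\{1,3,5\}$ when $d=4$'' you have omitted the pairs $(d,\delta)=(4,0)$ and $(4,4)$, which the claim also requires (the first bullet covers $d\in\{3,4\}$). Your method handles these two extra cases without difficulty---for instance, with $(d,c,\delta)=(4,-\tfrac13,0)$ one gets $e_1\equiv 1$ and $e_2\equiv 1\pmod 3$, hence $2e_1+2e_2\ge 4>2=R$, and $(4,-\tfrac13,4)$ gives $e_1\equiv 2\pmod 3$ so $2e_1\ge 4>R$ already---but you should include them in your enumeration.
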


\begin{proof}
    First of all, consider the case $d=4$.
    Then, the last identity reduces to $2e_1 + 2e_2 + e_3 + e_4 = -6 c(a_3,b_3,3)$, so that by~\eqref{eq:values_c_a_b}, we must have $c(a_3,b_3,3) \in \{0,-1/3\}$.

    \emph{Case 1:} $c(a_3,b_3,3) = 0$.

    \noindent This implies that $e_1 = e_2 = e_3 = e_4 = 0$, and thus $\Delta = b_3 = 2b_3 - 3a_2$, in particular $\Delta \equiv b_3 \equiv 2b_3 \bmod 3$, which contradicts the assumption that~$b_3$ is not divisible by~$3$.

    \emph{Case 2:} $c(a_3,b_3,3) = -1/3$.

    \noindent This means that $2e_1+2e_2+e_3+e_4=2$ and by~\eqref{eq:values_c_a_b}, we have $(a_3,b_3) \equiv (1,2) \bmod 3$.
    Since the $e_i$ are all non-negative integers, the only options are $e = (e_1,e_2,e_3,e_4) \in \{(1,0,0,0),(0,1,0,0),(0,0,1,1),(0,0,2,0),(0,0,0,2)\}$, which we handle separately as follows.

    \emph{Case 2.1:} $e = (1,0,0,0)$ and thus $\Delta = b_3 + 1 = 3b_1 - a_3$.

    \noindent We get $\Delta \equiv b_3+1 \equiv 0 \equiv -a_3 \bmod 3$, in contradiction to~$a_3$ not being divisible by~$3$.

    \emph{Case 2.2:} $e = (0,1,0,0)$ and thus $\Delta = b_3 = 3b_1 - a_3 + 1$.

    \noindent We get $\Delta \equiv b_3 \equiv 2 \equiv 1-a_3 \bmod 3$, in contradiction to $a_3 \equiv 1 \bmod 3$.

    \emph{Case 2.3:} $e = (0,0,1,1)$ and thus $\Delta = b_3 = 2b_3 - 3a_2 + 1$.

    \noindent Writing $b_3 = 3k + 2$ and $a_2=2\ell+1$, for some $k,\ell \in \Z$, we get $\Delta \equiv b_3 \equiv 2(3k+2)-3(2\ell+1)+1 \equiv 2 \bmod 6$, in contradiction to $\Delta \not\equiv 2 \bmod 6$.

    \emph{Case 2.4:} $e = (0,0,2,0)$ and thus $\Delta = b_3 = 2b_3 - 3a_2 + 2$.

    \noindent We get $\Delta \equiv b_3 \equiv 2b_3+2 \equiv 0 \bmod 3$, in contradiction to~$b_3$ not being divisible by~$3$.

    \emph{Case 2.5:} $e = (0,0,0,2)$ and thus $\Delta = b_3 = 3b_2 - 2a_3 + 2$.

    \noindent We get $\Delta \equiv b_3 \equiv -2a_3+2 \equiv 0 \bmod 3$, in contradiction to~$b_3$ not being divisible by~$3$.

    \smallskip

    It remains to consider the case $d=3$.
    Here, the last identity in the linear system reduces to $2e_1 + 2e_2 + e_3 + e_4 = 6 (1-c(a_3,b_3,3))$, so that by~\eqref{eq:values_c_a_b}, all three possibilities $c(a_3,b_3,3) \in \{0,-1/3,1/3\}$ may happen, leading to the cases where $2e_1 + 2e_2 + e_3 + e_4 \in \{4,6,8\}$.
    One may argue similarly as for $d=4$, but there will be considerably more cases to check.
    This is a finite computation modulo~$6$, and we instead refer to a small computer program, available at \url{https://github.com/BKriepke/Generic-Delta-modular-with-two-rows}.
\end{proof}

We note that for $d=4$ we have only used the condition $\Delta\not\equiv 2\bmod 6$ in Case 2.3. Therefore all examples of generic $\Delta$-modular matrices of type $3$ with $\Delta+4$ columns need to belong to that case. A short analysis then gives the class
\begin{equation*}
    M(a,b) = \begin{pmatrix}
        0 & 1 & \cdots &   1 &   2 & \cdots &   2 &   3 & \cdots &   3 \\
        1 & 0 & \cdots & \frac{\Delta+a_3}{3} & \frac{\Delta+1}{3} & \cdots & \frac{\Delta+2a_3-1}{3} & a_3 & \cdots & \Delta 
    \end{pmatrix}
\end{equation*}
where $\frac23\Delta \leq a_3 \leq \frac34 \Delta+1$ and $a_3\equiv 1\bmod 3$.
Note that for the families in \cref{ex:familiesF1_to_F3} we had 
\begin{equation*}
    a_3 = \begin{cases}
        9s+1 = \frac34\Delta-\frac12 &, \text{ if } \Delta = 12s+2 \\
        9s+7 = \frac34\Delta+1       &, \text{ if } \Delta = 12s+8\,.
    \end{cases}
\end{equation*}
We do not know if matrices of the above form are inequivalent (in the sense of Definition~4.5 of~\cite{kriepkeSizeIntegerPrograms2025}) for different values of $a_3$. 

Combining \cref{cor:upper_bound_m_1_2_3,claim:matrices_type_2,claim:matrices_type_3} now gives:

\begin{corollary}
\label{cor:precise_upper_bound_for_m_1_2_3}
    Let $m\in\{1,2,3\}$ and let $a,b\in\Z^m$ give a generic $\Delta$-submodular matrix $M(a,b)$ of type $m$. Then
    \begin{equation*}
        \abs{M(a,b)} \leq 
        \begin{cases}
            \Delta + 4 & ,\textrm{ if }\Delta \equiv 2 \bmod 6 \\
            \Delta + 3 & ,\textrm{ if }\Delta \equiv 1,3,5 \bmod 6 \\
            \Delta + 2 & ,\textrm{ if }\Delta \equiv 0,4 \bmod 6 \,.
        \end{cases}
    \end{equation*}
\end{corollary}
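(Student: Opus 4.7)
The plan is simply to take the maximum of the three bounds provided by \cref{cor:upper_bound_m_1_2_3}, \cref{claim:matrices_type_2}, and \cref{claim:matrices_type_3}, case-by-case on the residue of $\Delta$ modulo~$6$. There is no new content here; the only thing to check is that the three cited bounds are jointly tight for each residue class.

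Fix $m \in \{1,2,3\}$ and a generic $\Delta$-submodular matrix $M(a,b)$ of type~$m$. If $m=1$, then \cref{cor:upper_bound_m_1_2_3} gives $|M(a,b)| \leq \Delta + 2$, which is stronger than the claimed bound in every residue class, so this case is immediate. If $m=2$, then \cref{claim:matrices_type_2} yields $|M(a,b)| \leq \Delta + 3$ when $\Delta$ is odd and $|M(a,b)| \leq \Delta + 2$ when $\Delta$ is even; in both subcases this is at most the claimed quantity. If $m=3$, then \cref{claim:matrices_type_3} provides exactly the three bounds appearing in the corollary, depending on $\Delta \bmod 6$.

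It remains to cross-check that the max of these three per-type bounds agrees with the claimed expression in each of the six residue classes of $\Delta$ modulo~$6$. For $\Delta \equiv 2 \bmod 6$ the type-$3$ bound gives $\Delta+4$, which dominates the type-$1$ and type-$2$ bounds ($\Delta+2$ each, since $\Delta$ is even). For $\Delta \equiv 1,3,5 \bmod 6$ (so $\Delta$ odd) the type-$2$ and type-$3$ bounds both give $\Delta+3$, dominating the type-$1$ bound $\Delta+2$. For $\Delta \equiv 0,4 \bmod 6$ all three bounds collapse to $\Delta+2$. This exhausts all residue classes and yields the stated uniform bound.

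I expect no obstacles: the entire argument is a finite bookkeeping over six residues and three types, with all the nontrivial work already absorbed into the three preceding results.
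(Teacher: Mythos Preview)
Your proposal is correct and follows exactly the paper's approach: the corollary is stated there as an immediate combination of \cref{cor:upper_bound_m_1_2_3}, \cref{claim:matrices_type_2}, and \cref{claim:matrices_type_3}, and your residue-by-residue bookkeeping simply makes that combination explicit.
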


\section{An upper bound on the value of \texorpdfstring{\eqref{lp:primal_version}}{(\ref{lp:primal_version})} for large dimensions}\label{sect:upper_bound_linear_program}

Our goal from here on is to show that $z_m \leq w$ for all $m>3$ for a suitable choice of $w<1$ independent of $m$.
The argument is split up into two sections.
First, we consider the dual problem of~\cref{lp:primal_version} and find a feasible solution to this dual with objective value bounded by~$w=0.999$, for $m \geq 3257$.
In \cref{sect:closing-gap}, we then close the gap by a computational approach for the remaining dimensions.

Remember that for a given linear program in the form
\begin{align*}
    c^Tx &\to \max \\*
    Ax &\leq b \\*
    x &\geq 0
\end{align*}
the dual problem is given by
\begin{align*}
    b^T y &\to \min \\*
    A^Ty &\geq c \\*
    y &\geq 0 \,.
\end{align*}
We refer to~\cite{schrijver1986theoryoflinearandinteger} for an introduction to linear programming and its duality theory.

In our concrete case of \cref{lp:primal_version}, the vector $c\in\R^m$ has coordinates $c_k=\varphi(k)$, $k \in [m]$.
Instead of thinking about $b\in\R^{m^2}$ as a vector, for indexing purposes we want to think about it as a matrix $B = (B_{k,\ell}) \in \R^{m\times m}$ with $B_{k,\ell} = 2/(k\ell)$.
The matrix $A\in \R^{m^2 \times m}$ is given by  
\begin{equation*}
    A = 
    \begin{pmatrix}
        1 \\
        \vdots \\
        1 \\
        & 1 \\
        & \vdots \\
        & 1 \\
        &   & \ddots \\
        &   &        & 1 \\
        &   &        & \vdots \\
        &   &        & 1
    \end{pmatrix}
    +
    \begin{pmatrix}
        1 \\
        & \ddots \\
        &        & 1 \\
        1 \\
        & \ddots \\
        &        & 1 \\
        & \vdots \\
        1 \\
        & \ddots \\
        &        & 1
    \end{pmatrix} \,.
\end{equation*}
Again we want to think of the variable vector $y\in \R^{m^2}$ as a matrix $Y = (Y_{k,\ell}) \in \R^{m\times m}$.
Then, the dual problem can be written as 
\begin{align}
    \sum_{k,\ell=1}^m \frac{2}{k\ell} Y_{k,\ell}&\to \min \notag \\*
    \sum_{\ell=1}^m (Y_{k,\ell} + Y_{\ell,k}) &\geq \varphi(k), \qquad k=1,\ldots, m,\tag{$D_m$}\label{lp:dual_version}\\*
    Y&\geq 0. \notag 
\end{align}
We now want to find a feasible solution to \cref{lp:dual_version} with objective value $z_m^* \leq w$.
By weak duality we have $z_m \leq z_m^*$ and therefore $z_m \leq w$, as we wanted.

Before proceeding, we motivate our choice for a suitable feasible solution. For small values of $m$ the LP \cref{lp:dual_version} can be easily solved with a computer and the solution can be inspected. An optimal solution for $m=100$ is shown in \cref{fig:shape-solutions-Y}~(a). It holds that $Y_{k,\ell}=0$ for most $k,\ell$ and $Y_{k,\ell}\neq 0$ only for pairs $(k,\ell)$ with $k^2+\ell^2\approx m^2$, giving a picture of a quarter circle.
%
%
However, it seems quite hard to give a closed form for the entries $Y_{k,\ell}$ of an optimal solution~$Y$ of \cref{lp:dual_version} for general~$m$.
Instead we want to approximate the circle by enclosing it in three rectangles.
An illustration of this idea is given in \cref{fig:shape-solutions-Y}~(b).


\begin{figure}
\centering
 \subfloat[\centering Optimal solution for $m=100$. The dots represent the non-zero entries in~$Y$.]{{\includegraphics[width=.45\textwidth]{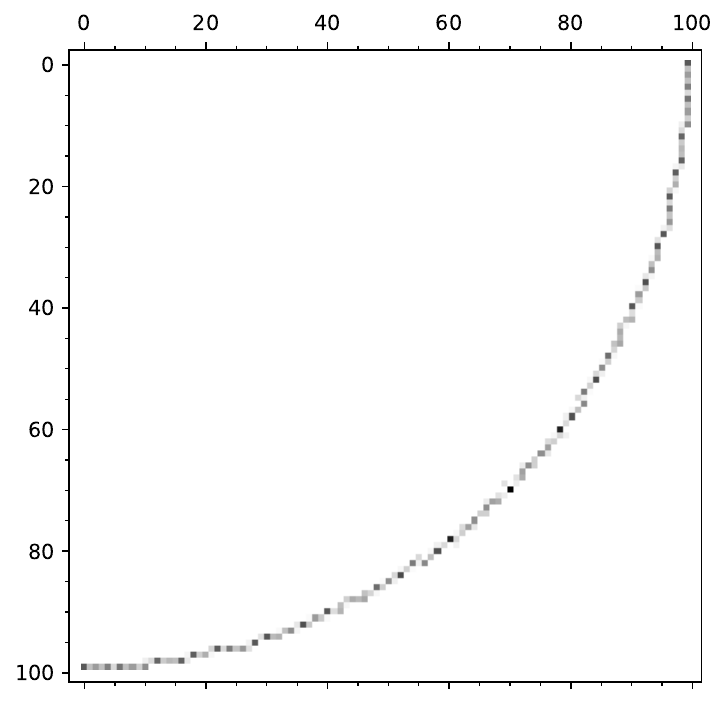} }}%
  \label{fig:solution_m_100}
 \qquad
 \subfloat[\centering Abstraction to a general solution~$Y$.]{{\includegraphics[width=.41\textwidth]{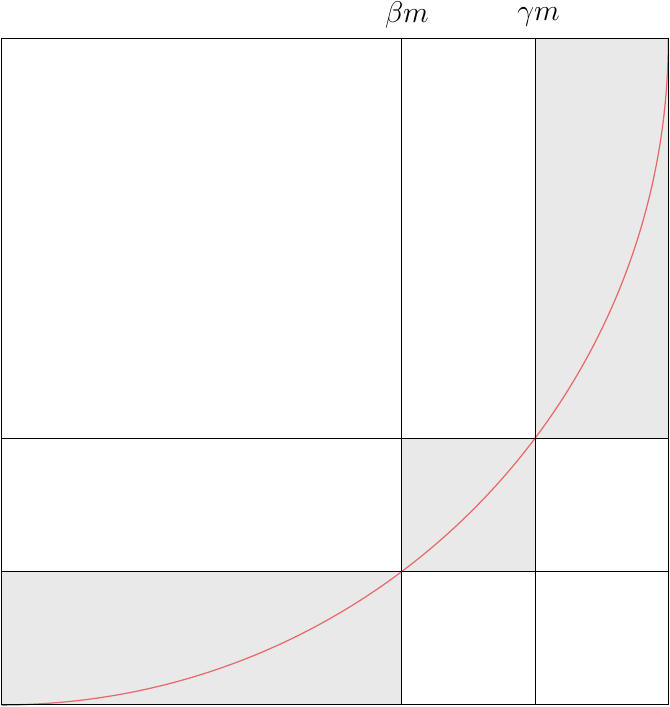} }}%
  \label{fig:solution_Y}
\caption{The shape of (near) optimal solutions~$Y$ to the dual problem \cref{lp:dual_version}.}
\label{fig:shape-solutions-Y}
\end{figure}

We choose the constants $\beta=\sqrt{1/3}$ and $\gamma=\sqrt{2/3}$ so that $(\beta m)^2 + (\gamma m)^2 = m^2$.
We let $\varepsilon=0.001$ and assume from now on that $m\geq 3257$.
In particular $\beta m, \gamma m, m\geq 1880$ so that we can apply \cref{lem:explicit_value_x0} in the following calculations. 
Given these parameters our choice of a feasible solution $Y = (Y_{k,\ell}) \in \R^{m \times m}$ is given by


\begin{align}
Y_{k,\ell} &=
\begin{cases}
& \phantom{ \qquad } \left(0 \leq k \leq \beta m \textrm{ and } \gamma m \leq  \ell \leq m \right) \textrm{ or} \\
C \, \varphi(k)\varphi(\ell) & , \textrm{ if \ } \left(\beta m\leq k \leq \gamma m \textrm{ and } \beta m\leq \ell \leq \gamma m\right) \textrm{ or} \\
& \phantom{ \qquad } \left(\gamma m\leq k \leq m \textrm{ and } 0\leq \ell\leq \beta m\right) \\
0 & ,\textrm{ otherwise}
\end{cases}
\label{eq:choosing_feasible_solution}
\end{align}
with a positive constant~$C$ that will be determined later on.
Note that the non-zero entries of~$Y$ correspond to the three gray rectangles from \cref{fig:shape-solutions-Y}~(b).

We now need to check that this is in fact a feasible solution, that is, it satisfies all the constraints in \cref{lp:dual_version}.
The matrix~$Y$ has by definition only non-negative entries, so we only need to consider the non-homogeneous constraints.
We start by splitting these constraints into three cases, depending on which interval $k$ belongs to:

First, let~$k$ be such that $0\leq k\leq \beta m$.
Then $Y_{k,\ell}\neq 0$ only for $\gamma m\leq \ell\leq m$.
Similarly, $Y_{\ell, k}\neq 0$ only for $\gamma m\leq \ell\leq m$ since $Y$ is symmetric by definition.
In view of \cref{lem:explicit_value_x0} and recalling $\varepsilon = 0.001$, we thus obtain
\begin{align*}
    \sum_{\ell=1}^m (Y_{k,\ell} + Y_{\ell, k}) 
        &= \sum_{\gamma m\leq \ell \leq m} Y_{k,\ell} + \sum_{\gamma m\leq \ell\leq m} Y_{\ell, k} \\
        &= 2C\varphi(k) \sum_{\gamma m\leq \ell\leq m} \varphi(\ell) \\
        &=2C\varphi(k) \left(\sum_{\ell\leq m} \varphi(\ell)-\sum_{\ell < \gamma m} \varphi(\ell)\right) \\
        &\geq 2C\varphi(k) \left((1-\varepsilon)\frac{3}{\pi^2}m^2 - (1+\varepsilon)\frac{3}{\pi^2}(\gamma m)^2\right) \\
        &= \frac{6}{\pi^2}Cm^2\varphi(k)\left((1-\varepsilon)-(1+\varepsilon)\gamma^2\right) \\
        &= \frac{6}{\pi^2}Cm^2\varphi(k) \left((1-\varepsilon)-(1+\varepsilon)\frac23\right) \\
        &\overset{!}{\geq }\varphi(k),
\end{align*}
so in order for this last inequality to hold we need that
\begin{align}
    C&\geq \frac{\pi^2}{6}\frac{1}{m^2}\left(\frac13-\frac53\varepsilon\right)^{-1}. \label{eq:lower_bound_C}
\end{align}
The cases $\beta m\leq k \leq \gamma m$ and $\gamma m\leq k\leq m$ can be done analogously, leading to the additional conditions
%
%
\[
C \geq \frac{\pi^2}{6}\frac{1}{m^2}\left(\frac13-\varepsilon\right)^{-1} \quad \textrm{ and } \quad
C \geq \frac{\pi^2}{6}\frac{1}{m^2}\left(\frac13-\frac13\varepsilon\right)^{-1} \,,
\]
respectively.
Clearly the strongest condition on~$C$ is made by inequality~\eqref{eq:lower_bound_C}. 

Now we consider the objective function.
We split the sum into three parts and use \cref{lem:explicit_value_x0} again.
Note that the sums run over integer indices in the given ranges, and that $\beta m$ and $\gamma m$ cannot be integers, for any~$m$.
\begin{align*}
    z_m^*
        &= \sum_{k,\ell=1}^m \frac{2}{k\ell} Y_{k,\ell} \\
        &= 2C\left(
            \sum_{\substack{0\leq k\leq \beta m \\ \gamma m\leq \ell \leq m}} \frac{\varphi(k)}{k}\frac{\varphi(\ell)}{\ell} 
            +\sum_{\substack{\beta m\leq k\leq \gamma m\\\beta m\leq \ell\leq \gamma m}} \frac{\varphi(k)}{k}\frac{\varphi(\ell)}{\ell}  
            +\sum_{\substack{\gamma m\leq k\leq m\\0\leq \ell\leq \beta m}} \frac{\varphi(k)}{k}\frac{\varphi(\ell)}{\ell}
            \right) \\
        &= 2C\left(
            2 \sum_{\substack{0\leq k\leq \beta m \\ \gamma m\leq \ell \leq m}} \frac{\varphi(k)}{k}\frac{\varphi(\ell)}{\ell} 
            + \left[\sum_{\beta m\leq k\leq \gamma m} \frac{\varphi(k)}{k}\right]^2
            \right) \\
        &= 2C\left(
            2 \left[\sum_{k\leq \beta m} \frac{\varphi(k)}{k}\right]\left[\sum_{\ell\leq m}\frac{\varphi(\ell)}{\ell}-\sum_{\ell \leq \gamma m}\frac{\varphi(\ell)}{\ell}\right]
            + \left[\sum_{k\leq \gamma m} \frac{\varphi(k)}{k} - \sum_{k \leq \beta m} \frac{\varphi(k)}{k}\right]^2
            \right) \\
        &\leq 2C\left(
            2\left[(1+\varepsilon)\frac{6}{\pi^2}\beta m\right] \left[(1+\varepsilon)\frac{6}{\pi^2}m - (1-\varepsilon)\frac{6}{\pi^2}\gamma m\right] \right. \\
        &\phantom{\leq 2C 2 \, } \left.    + \left[(1+\varepsilon)\frac{6}{\pi^2}\gamma m - (1-\varepsilon) \frac{6}{\pi^2}\beta m\right]^2
            \right) \\
        &= 2\left(\frac{6}{\pi^2}\right)^2 m^2 C\left(
            2\left[(1+\varepsilon)\beta\right] \left[(1+\varepsilon)-(1-\varepsilon)\gamma \right]
            + \left[(1+\varepsilon)\gamma -(1-\varepsilon)\beta\right]^2
            \right) \,.
\end{align*}
If we denote
\begin{equation*}
    f(\varepsilon) = 2\left[(1+\varepsilon)\beta\right] \left[(1+\varepsilon)-(1-\varepsilon)\gamma \right]
    + \left[(1+\varepsilon)\gamma -(1-\varepsilon)\beta\right]^2 \,,
\end{equation*}
then the desired condition $z_m^* \leq w$ will be satisfied as long as
\begin{equation}
    C \leq \frac w 2\left(\frac{\pi^2}{6}\right)^2 \frac{1}{m^2} \frac{1}{f(\varepsilon)} \,. \label{eq:upper_bound_C}
\end{equation}

We now obtain the following result:

\begin{lemma}
\label{lem:z_m_less_than_c_for_big_m}
Let $z_m$ be the optimal value of \cref{lp:dual_version}.
For every $m\geq 3257$, we have $z_m \leq 0.999$.
\end{lemma}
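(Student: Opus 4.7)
The plan is to invoke weak LP duality and exhibit a feasible solution $Y$ of \cref{lp:dual_version} whose objective value is at most $0.999$. Since $z_m$ equals the optimum of the primal, it is bounded above by the objective value at any dual-feasible point. The candidate to use is the block-structured matrix $Y$ defined in \eqref{eq:choosing_feasible_solution}, with $\varepsilon=0.001$, $\beta=\sqrt{1/3}$, $\gamma=\sqrt{2/3}$, and a constant $C>0$ yet to be pinned down. The hypothesis $m\geq 3257$ is engineered precisely so that $\beta m \geq \sqrt{1/3}\cdot 3257 > 1880 = x_0(0.001)$, which allows us to apply \cref{lem:explicit_value_x0} to partial sums of $\varphi(k)$ and $\varphi(k)/k$ over any of the three intervals $[1,\beta m]$, $[\beta m,\gamma m]$, $[\gamma m,m]$.

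Next, I would verify feasibility of $Y$. The constraints for $k$ in each of the three intervals reduce, using the relation $\beta^2+\gamma^2=1$, to three lower bounds on $C$ of the shape $C\geq \frac{\pi^2}{6}\cdot\frac{1}{m^2}\cdot\frac{1}{c(\varepsilon)}$; the tightest one is
\[
 C \;\geq\; \frac{\pi^2}{6}\cdot\frac{1}{m^2}\cdot\left(\tfrac13-\tfrac53\varepsilon\right)^{-1},
\]
derived already in~\eqref{eq:lower_bound_C}. Simultaneously, to guarantee that the objective $z_m^*$ is at most $w:=0.999$, it suffices by~\eqref{eq:upper_bound_C} that
\[
 C \;\leq\; \frac{w}{2}\left(\frac{\pi^2}{6}\right)^2\frac{1}{m^2}\cdot\frac{1}{f(\varepsilon)}.
\]
Both intervals are nonempty (and hence a valid $C$ exists) as soon as
\[
 \frac{12\,f(\varepsilon)}{\pi^2\bigl(\tfrac13-\tfrac53\varepsilon\bigr)} \;\leq\; w.
\]

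Finally, I would numerically check this last inequality for $\varepsilon=0.001$ and $w=0.999$. Plugging $\beta=\sqrt{1/3}$, $\gamma=\sqrt{2/3}$ into $f(\varepsilon)=2(1+\varepsilon)\beta\bigl[(1+\varepsilon)-(1-\varepsilon)\gamma\bigr]+\bigl[(1+\varepsilon)\gamma-(1-\varepsilon)\beta\bigr]^2$ gives $f(0.001)\approx 0.272$, so the left-hand side evaluates to roughly $0.997$, comfortably below $0.999$. Choosing $C$ to be the lower bound (or any value in the resulting interval) yields a dual-feasible $Y$ with objective at most $0.999$, establishing $z_m\leq 0.999$.

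There is no conceptual obstacle left here; all the analytic work was done in setting up \eqref{eq:choosing_feasible_solution}, \eqref{eq:lower_bound_C}, and \eqref{eq:upper_bound_C}. The only mild care needed is in the arithmetic verification of the final scalar inequality, and in confirming that $m\geq 3257$ is indeed the correct threshold forcing $\beta m\geq x_0(0.001)$ so that \cref{lem:explicit_value_x0} is applicable uniformly across the three summation ranges.
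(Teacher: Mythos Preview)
Your proposal is correct and follows exactly the paper's own argument: you invoke the dual-feasible matrix~$Y$ from~\eqref{eq:choosing_feasible_solution} with $\varepsilon=0.001$, $\beta=\sqrt{1/3}$, $\gamma=\sqrt{2/3}$, use $m\geq 3257$ to guarantee $\beta m\geq 1880=x_0(0.001)$ so that \cref{lem:explicit_value_x0} applies, and then verify that the lower bound~\eqref{eq:lower_bound_C} and the upper bound~\eqref{eq:upper_bound_C} on~$C$ are compatible for $w=0.999$. The paper phrases this last check by computing the explicit interval $4.9596/m^2\leq C\leq 4.9678/m^2$ and choosing $C=4.96/m^2$, which is equivalent to your single scalar inequality $\tfrac{12 f(\varepsilon)}{\pi^2(\tfrac13-\tfrac53\varepsilon)}\leq w$.
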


\begin{proof}
    For $\varepsilon=0.001$ and $w=0.999$ the inequalities \cref{eq:lower_bound_C,eq:upper_bound_C} imply
    \begin{equation*}
        \frac{4.9596}{m^2} \leq C \leq \frac{4.9678}{m^2} \,.
    \end{equation*}
    Hence, choosing $C=4.96/m^2$ we obtain that the solution given in \cref{eq:choosing_feasible_solution} is indeed feasible and its objective value is upper bounded by~$w$.
    Recall, that we chose $m\geq 3257$ in order to have $\beta m, \gamma m, m\geq 1880$, so that we could apply \cref{lem:explicit_value_x0} in the calculations above.
\end{proof}

\section{Closing the gap and proof of \texorpdfstring{\cref{thm:upper_bound_on_g_Delta2}}{Theorem~\ref{thm:upper_bound_on_g_Delta2}}}
\label{sect:closing-gap}

Here we show that we also have $z_m \leq 0.999$ in the range $4\leq m\leq 3257$, which is the final ingredient for the proof of \cref{thm:upper_bound_on_g_Delta2}.
To this end, we employ a computer to solve the relevant linear programs.
We use GLPK from GNU~\cite{gnuGNULinearProgramming} implemented in \texttt{sagemath}~\cite{thesagedevelopersSageMathSageMathematics2023}. 

While for small values of~$m$ the LP \cref{lp:primal_version} or its dual~\cref{lp:dual_version} can be easily solved, this is no longer the case for bigger~$m$.
Note that in the primal problem we have $m$ variables and $m^2$ constraints while in the dual problem we have $m^2$ variables and $m$ constraints, which makes the linear program too large for a computer to quickly solve it.
Hence for larger values of $m$ we will only give an upper bound for the optimal value~$z_m$.

Just as in \cref{sect:upper_bound_linear_program}, our approach is based on the observation that the optimal solutions~$Y$ of \cref{lp:dual_version}, for small values of $m$, satisfy $Y_{k,\ell}\neq 0$ only for index pairs $(k,\ell)$ with $k^2+\ell^2\approx m^2$.
Hence, we hope that we can still get a good enough approximation $z_m \leq \hat z_m \leq 0.999$ by setting $Y_{k,\ell}=0$ for other indices $(k,\ell)$, which is effectively achieved by not including those variables in the linear program.
More precisely, we only include variables~$Y_{k,\ell}$ for the pairs $(k,\ell)$ in the following set:
\begin{equation*}
    S = \left\{(k,\ell)\in \{1,\ldots,m\}^2: 1\leq k\leq \frac{1}{\sqrt2}m, \sqrt{(1-\varepsilon)m^2-k^2}\leq \ell\leq \sqrt{(1+\varepsilon)m^2-k^2}\right\} 
\end{equation*}
for some small $\varepsilon>0$.  The approximate linear program~\cref{lp:approximate_dual_version} is given by
\begin{align}
    \sum_{(k,\ell)\in S} \frac{2}{k\ell} Y_{k,\ell}&\to \min \notag \\*
    \sum_{(k,\ell)\in S \text{ or } (\ell,k)\in S} (Y_{k,\ell} + Y_{\ell,k}) &\geq \varphi(k), \qquad k=1,\ldots, m,\tag{$\hat D_m$}\label{lp:approximate_dual_version}\\*
    Y&\geq 0 \,. \notag 
\end{align}
The choice $\varepsilon=1.85/m$ works well for our purposes, leading to roughly $2m$ variables, making the LP \eqref{lp:approximate_dual_version} much easier to solve than \eqref{lp:dual_version}.
If the optimal value $\hat z_m$ of~\eqref{lp:approximate_dual_version} is bigger than~$0.999$, we can simply increase $\varepsilon$ by a factor of~$2$ and try again.
In practice, this was only needed for a few small values~$m$.

However, it would still take a considerable amount of time to solve all approximate LPs~\eqref{lp:approximate_dual_version} for $4\leq m\leq 3257$, which is why we employ the following lemma:

\begin{lemma}\label{lem:bound_zm+1_by_zm}
Let $z_m$ be the optimal value of~\eqref{lp:dual_version}.
Then
    \begin{equation*}
        z_{m+1} \leq z_m + \frac{\varphi(m+1)}{(m+1)^2} \,.
    \end{equation*}
\end{lemma}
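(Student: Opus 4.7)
The plan is to exploit the fact that the feasible region of \cref{lp:primal_version} for parameter $m+1$ projects onto the feasible region for parameter $m$ by simply dropping the last coordinate. Concretely, I would argue via the primal LP (whose optimal value also equals $z_m$ by LP duality, cf.\ \cref{lem:opt_value_exists} and the strong duality recalled in \cref{sect:upper_bound_linear_program}).

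First, let $x^\star = (x_1^\star, \dots, x_{m+1}^\star) \in \R^{m+1}$ be an optimal solution of \cref{lp:primal_version} for parameter $m+1$, so that $\sum_{k=1}^{m+1} \varphi(k) x_k^\star = z_{m+1}$. The truncation $(x_1^\star, \dots, x_m^\star) \in \R^m$ is non-negative and satisfies all constraints $x_k^\star + x_\ell^\star \leq \tfrac{2}{k\ell}$ for $k,\ell \in [m]$, since these constraints form a subset of those for the $(m+1)$-LP. Hence it is a feasible point for \cref{lp:primal_version} with parameter $m$, giving
\[
\sum_{k=1}^m \varphi(k) x_k^\star \leq z_m.
\]

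Second, from the diagonal constraint with $k = \ell = m+1$, we obtain $2 x_{m+1}^\star \leq \tfrac{2}{(m+1)^2}$, that is, $x_{m+1}^\star \leq \tfrac{1}{(m+1)^2}$. Splitting the objective for the $(m+1)$-LP accordingly yields
\[
z_{m+1} \;=\; \sum_{k=1}^m \varphi(k) x_k^\star \;+\; \varphi(m+1)\, x_{m+1}^\star \;\leq\; z_m + \frac{\varphi(m+1)}{(m+1)^2},
\]
which is the claimed inequality.

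The argument is essentially a one-line projection argument, so I do not anticipate any serious obstacle. The only care point is to confirm that the primal LP indeed attains its maximum so that the word ``optimal solution'' is justified, which is already ensured by \cref{lem:opt_value_exists}. Note that if one worked instead directly with the dual \cref{lp:dual_version}, one would need to lift a solution from $m$ to $m+1$ variables, which is less natural; the primal side makes the monotone comparison transparent.
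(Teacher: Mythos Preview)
Your proof is correct and takes the mirror-image route to the paper's. The paper works directly with the dual \cref{lp:dual_version}: starting from an optimal $Y \in \R^{m\times m}$ for $(D_m)$, it extends to a feasible point of $(D_{m+1})$ by bordering with zeros and setting the new diagonal entry $\hat Y_{m+1,m+1} = \varphi(m+1)/2$, which adds exactly $\varphi(m+1)/(m+1)^2$ to the objective. You instead argue via the primal \cref{lp:primal_version}, truncating an optimal $(m+1)$-solution and bounding the dropped coordinate via the diagonal constraint $k=\ell=m+1$. Your approach is arguably cleaner --- truncation is automatically feasible, whereas the paper must verify the new $(m+1)$-st dual constraint by hand --- but it does require invoking strong LP duality to identify the primal and dual optimal values, which the paper's argument sidesteps by staying entirely on the dual side. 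Either way the substance is the same one-line observation, just viewed from opposite ends of the LP pair.
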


\begin{proof}
    Let $Y\in \R^{m\times m}$ be an optimal solution of~\eqref{lp:dual_version}.
    We construct a feasible solution~$\hat Y$ for $(D_{m+1})$ as follows:
    \begin{equation*}
        \hat{Y} = \begin{pmatrix}
            Y & 0 \\
            0 & \frac{\varphi(m+1)}{2}
        \end{pmatrix} \in \R^{(m+1)\times(m+1)} \,.
    \end{equation*}
    As $Y$ is feasible for~\eqref{lp:dual_version}, we see that~$\hat Y$ satisfies the first $m$ constraints of~$(D_{m+1})$.
    The $(m+1)$-st constraint reads 
    \begin{equation*}
        \sum_{\ell=1}^{m+1} \left(\hat Y_{m+1,\ell} + \hat Y_{\ell,m+1}\right) = 2 \hat Y_{m+1,m+1} = \varphi(m+1) \geq \varphi(m+1)
    \end{equation*}
    and is also satisfied.
    Hence $\hat Y$ is feasible for $(D_{m+1})$. The objective value of $\hat Y$ equals
    \begin{equation*}
        \sum_{k,\ell=1}^{m+1} \frac{2}{k\ell} \hat{Y}_{k,\ell} = \sum_{k,\ell=1}^m \frac{2}{k\ell} Y_{k,\ell} + \frac{2}{(m+1)^2}\hat Y_{m+1,m+1} = z_m + \frac{\varphi(m+1)}{(m+1)^2} \,.
    \end{equation*}
    Therefore, the optimal value $z_{m+1}$ of $(D_{m+1})$ satisfies
    \begin{equation*}
        z_{m+1} \leq z_m + \frac{\varphi(m+1)}{(m+1)^2} \,,
    \end{equation*}
    as claimed.
\end{proof}

We remark that, in view of~\cref{eq:asymptotic_phi_div_k^2}, the series
\begin{equation*}
    \sum_{m=1}^\infty \frac{\varphi(m)}{m^2}
\end{equation*}
diverges.
Hence, it would not be enough to show that $z_m \leq 0.999$ for sufficiently large values of~$m$ until the tail of the series converges to a small enough value and then conclude $z_m \leq \bar w < 1$, for some other constant~$\bar w$, say $\bar w = 0.9999$.

The desired speed up in our computations now comes as follows:
Denote by $\hat z_m$ the optimal value of $(\hat D_m)$.
Then, by \cref{lem:bound_zm+1_by_zm}, we have
\begin{equation*}
    z_{m+1} \leq z_m + \frac{\varphi(m+1)}{(m+1)^2}\leq \hat z_m + \frac{\varphi(m+1)}{(m+1)^2} \,,
\end{equation*}
and we do not need to solve $(\hat D_{m+1})$ if $\hat z_m + \frac{\varphi(m+1)}{(m+1)^2} \leq 0.999$.
Using these observations, it takes about a minute of computer time (see \url{https://github.com/BKriepke/Generic-Delta-modular-with-two-rows}) to check that indeed
\[
z_m \leq 0.999 \quad\textrm{ for }\quad 4 \leq m \leq 3257 \,.
\]
In summary, together with \cref{lem:z_m_less_than_c_for_big_m}, we obtain

\begin{lemma}
\label{lem:z_m_less_than_c_for_all_m}
We have $z_m \leq 0.999$, for every integer $m \geq 4$.
\end{lemma}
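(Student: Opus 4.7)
My plan is to combine the asymptotic bound already established in \cref{lem:z_m_less_than_c_for_big_m}, which covers all $m \geq 3257$, with a computer-assisted verification for the finite range $4 \leq m \leq 3257$. The only substantive issue is to make the computer-assisted part practically feasible, since the full dual \eqref{lp:dual_version} has $m^2$ variables, which becomes prohibitive as $m$ approaches several thousand.

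To address this, I would exploit the empirical observation illustrated in \cref{fig:shape-solutions-Y}~(a): the optimal solutions $Y$ of \eqref{lp:dual_version} are highly sparse and concentrated on the narrow annular band $k^2+\ell^2\approx m^2$. Accordingly, I solve the reduced program \eqref{lp:approximate_dual_version} in which only variables $Y_{k,\ell}$ with $(k,\ell)\in S$ are active, where $S$ is the thin band of width governed by a parameter $\varepsilon$. Since \eqref{lp:approximate_dual_version} is obtained from \eqref{lp:dual_version} by fixing the remaining variables to zero, every feasible $Y$ for \eqref{lp:approximate_dual_version} gives a feasible $Y$ for \eqref{lp:dual_version}, and by weak duality its optimal value $\hat z_m$ satisfies $z_m \leq \hat z_m$. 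Choosing $\varepsilon\approx 1.85/m$ produces roughly $2m$ variables, which an LP solver like GLPK handles easily; if $\hat z_m$ exceeds $0.999$, I would widen the band by doubling $\varepsilon$ and re-solve.

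The second ingredient that makes the run time manageable is \cref{lem:bound_zm+1_by_zm}, which gives $z_{m+1} \leq \hat z_m + \varphi(m+1)/(m+1)^2$. Thus, whenever the computed $\hat z_m$ has sufficient slack below the threshold $0.999$, the bound for $z_{m+1}$ (and often several successive values) is automatic and no new LP must be solved. Iterating this procedure, one sweeps through $4 \leq m \leq 3257$ in a small number of actual LP computations, producing the bound $z_m \leq 0.999$ throughout.

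The main obstacle is not mathematical but practical: one must verify that the chosen band $S$ is wide enough to capture the true optimum of \eqref{lp:dual_version} to within the required tolerance, and that at the handful of $m$ where the gap to $0.999$ is tight the LP solver still succeeds with the reduced variable set. Should the $1.85/m$ heuristic fail at some $m$, the adaptive doubling of $\varepsilon$ resolves it in practice, as is documented in the accompanying code. Combining the resulting bound on the range $4 \leq m \leq 3257$ with \cref{lem:z_m_less_than_c_for_big_m} for $m \geq 3257$ completes the proof.
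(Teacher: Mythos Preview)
Your proposal is correct and follows essentially the same approach as the paper: the split at $m=3257$ using \cref{lem:z_m_less_than_c_for_big_m}, the restricted dual \eqref{lp:approximate_dual_version} over the annular band with $\varepsilon\approx 1.85/m$ (doubled if needed), and the use of \cref{lem:bound_zm+1_by_zm} to skip LP solves are exactly the ingredients the paper employs. One minor phrasing point: you do not need the band to ``capture the true optimum'' of \eqref{lp:dual_version}; since restricting variables only shrinks the feasible set of the minimization problem, $\hat z_m\geq z_m$ automatically, and all that is required is $\hat z_m\leq 0.999$, which you correctly note earlier.
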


Finally, we are in the position to give a proof of our main result \cref{thm:upper_bound_on_g_Delta2}.

\begin{proof}[Proof of \cref{thm:upper_bound_on_g_Delta2}]
%
    Let $\Delta\geq 10^8$ and let $A$ be a generic $\Delta$-modular matrix with two rows.
    Let~$M$ be the set of integers $m\leq \sqrt{\tfrac{\pi}{2}}\sqrt\Delta$ such that there exists a matrix $M(a,b)$ of type $m$ with at least as many columns as~$A$.
    By \cref{prop:omnipresence_matrices_M_a_b} the set $M$ is nonzero. Assume first that $M\setminus\{1,2,3\}$ is nonempty, let $m\in M\setminus\{1,2,3\}$ be arbitrary and let $M(a,b)$ be a matrix of type $m$ with at least as many columns as~$A$.
    By \cref{lem:z_m_less_than_c_for_all_m} we have $z_m \leq 0.999$.
    From the concrete bound~\eqref{eq:implied_constant_key_proposition} in the proof of \cref{prop:upper_bound_number_columns_version_2}, we get
    \begin{align*}
        \abs{A}&\leq \abs{M(a,b)} \\
            &\leq 0.999\Delta + 1+ \frac{\sqrt{\frac\pi2\Delta}}{\zeta(2)}\left(\log \sqrt{\frac\pi2\Delta} + 2\gamma - 1 - \frac{2\zeta'(2)}{\zeta(2)}\right) + 15\left(\frac\pi2\Delta\right)^{1/4}\log \sqrt{\frac\pi2\Delta} \\
            &\leq 0.999\Delta + 0.001\Delta = \Delta \,,
    \end{align*}
    where the last inequality holds because $\Delta\geq 10^8$.
    
    Otherwise we have $\emptyset\neq M\subseteq \{1,2,3\}$ and using \cref{cor:precise_upper_bound_for_m_1_2_3} we obtain
    \begin{equation*}
        \abs{A} \leq \abs{M(a,b)} \leq \tilde\g(\Delta) =
            \begin{cases}
                \Delta + 4 & ,\textrm{ if }\Delta \equiv 2 \bmod 6 \\
                \Delta + 3 & ,\textrm{ if }\Delta \equiv 1,3,5 \bmod 6 \\
                \Delta + 2 & ,\textrm{ if }\Delta \equiv 0,4 \bmod 6 \,.
            \end{cases}
    \end{equation*}
    As mentioned in the introduction, the lower bound $\g(\Delta,2) \geq \tilde\g(\Delta)$ was shown in~\cite{kriepkeSizeIntegerPrograms2025}, see also \cref{ex:familiesF1_to_F3}.
\end{proof}

\section{Improvements and limitations of our approach}
\label{sect:limitations}

As mentioned in the introduction, the constant $\Delta_0=10^8$ in \cref{thm:upper_bound_on_g_Delta2} is not optimal.
In this last section, we discuss possible improvements in our proof that would reduce the value of~$\Delta_0$.
At the same time, we point to the limitations of the approach, showing that without significantly new ideas, it is not possible to reduce $\Delta_0$ far enough to be able to compute $\g(\Delta,2)$ for all $\Delta\leq \Delta_0$ with a computer in a reasonable timeframe.

The proof of \cref{thm:upper_bound_on_g_Delta2} shows that~$\Delta_0$ is essentially determined by the inequality
\begin{equation*}
    \frac{1}{\zeta(2)}\cdot c_1\sqrt\Delta\log\left(c_1\sqrt\Delta\right)\leq (1-w)\Delta\,,
\end{equation*}
where we ignore the precise error term of \cref{prop:upper_bound_number_columns_version_2} for the moment in favour of simplicity. Here, $c_1=\sqrt{\pi/2}$ is the constant for which $m\leq c_1\sqrt\Delta$ holds in \cref{prop:omnipresence_matrices_M_a_b} and~$w$ is the value for which we show $z_m \leq w$, for $m\geq 4$. In particular, any improvement to the error term of \cref{prop:upper_bound_number_columns_version_2} or the values $c_1$ and $w$ will also improve~$\Delta_0$.
As we do not expect significant improvements on~$c_1$ or said error term, we only discuss possibilities to reduce~$w$.

We have shown $z_m \leq 0.999$, for $m\geq 4$ in two steps.
A theoretical argument covered all $m\geq m_0=3257$, and we used a computer for the range $4 \leq m \leq m_0$. 

For the first step, we presented a feasible solution~$Y$ to~\cref{lp:dual_version} based on approximating the quarter circle shown in \cref{fig:shape-solutions-Y} by $p=3$ rectangles. This gave us two conditions for the constant $C$, one of the form $C\geq g(\beta,\gamma,\varepsilon)$ to satisfy the constraints and one of the form $C\leq h(\beta,\gamma,\varepsilon,w)$ to satisfy $z_m^*\leq w$, for some functions $g,h$. These conditions could then be rearranged to a form $w\geq k(\beta,\gamma,\varepsilon)$ (although we have not done so). In our case we got a function monotonically increasing in $\varepsilon>0$, meaning a smaller $\varepsilon$ gives a smaller~$w$. However, decreasing $\varepsilon$ comes at the cost of increasing the value $x_0(\varepsilon)$ in \cref{lem:explicit_value_x0}. This implies an increase of~$m_0$, because it needs to satisfy $\beta m_0 \geq x_0(\varepsilon)$. 

We may improve the approximation of the quarter circle using $p \geq 4$ rectangles and a sequence $0<\beta_1<\beta_2<\ldots<\beta_{p-1}<1$ instead of $0<\sqrt{1/3}<\sqrt{2/3}<1$, for example $\beta_i = \tan\left(\frac ip\frac{\pi}{2}\right)$. This would presumably allow for a better bound~$w$. However, it again comes at the cost of increasing~$m_0$, due to the constraint $\beta_1 m_0 \geq x_0(\varepsilon)$.

For the second step, we used a computer to solve the approximate linear programs~\cref{lp:approximate_dual_version} to show $z_m \leq \hat z_m \leq w$, for all $4\leq m\leq m_0$, with the help of \cref{lem:bound_zm+1_by_zm} for a speed up.
Of course, lowering~$w$ or increasing~$m_0$ leads to a higher computational effort.
More importantly, there will be values of~$m \geq 4$ for which the inequality $z_m \leq w$ does not hold. A concrete example is $z_5=119/120 = 0.991\overline6$ and any $w<119/120$. Any such value of $m$ needs to be dealt with individually. This could come in the form of a proof similar to the proofs of \cref{claim:matrices_type_2,claim:matrices_type_3} for $m=2,3$, however it is not obvious how to generalize these proofs without ending up with too many cases to check. Another approach is to compute the error term of \cref{prop:upper_bound_number_columns_version_2} explicitly. As an example, for $m=5$ we have $\abs{M(a,b)}\leq \frac{119}{120}\Delta + 7$. The inequality $\frac{119}{120}\Delta + 7\leq \Delta+2$ then gives that for $\Delta\geq 600$ no matrix $M(a,b)$ of type $5$ could give a generic $\Delta$-modular matrix with more columns than a matrix of type $1,2$ or $3$, meaning we only have to check $\Delta\leq 600$. While for $m=5$ this is doable, for bigger $m$ it quickly becomes unfeasible.

Furthermore, the value of $w$ can likely not be reduced further than $w\approx 0.955$. Based on our numerical data it seems that $z_m$ converges to a value near $0.955$.
Even if we could prove that indeed $z_m \leq 0.955$, for $m$ large enough, then using $w=0.955$ in the relevant inequality of the proof of \cref{thm:upper_bound_on_g_Delta2} gives $\Delta_0\approx 65000$. While this is certainly a huge improvement over $\Delta_0=10^8$, it is still not feasible to compute $\g(\Delta,2)$ for all $\Delta\leq \Delta_0$ and thus determine $\g(\Delta,2)$ completely.

\subsection*{Acknowledgments}

We thank Zach Walsh for valuable comments on an earlier version and for pointing out the implications of our results in matroid theory as described in \cref{cor:matroids}.

\bibliographystyle{amsplain}
\bibliography{refs}

\end{document}